\journal{J. Fixed Point Theory Appl.}
\theoremstyle{plain}                
\newtheorem{thm}{Theorem}[section]  
\newtheorem{lem}[thm]{Lemma}        
\newtheorem{prop}[thm]{Proposition}
\newtheorem{cor}[thm]{Corollary}
\theoremstyle{definition}
\newtheorem{defn}[thm]{Definition}
\theoremstyle{remark}
\newtheorem*{rem}{Remark}
\DeclareMathOperator{\diam}{diam}   
\DeclareMathOperator{\cl}{cl}
\DeclareMathOperator*{\conv}{conv}  
\providecommand{\norm}[1]{\lVert#1\rVert}
\begin{document}

\begin{frontmatter}

\title{Exact controllability of non-Lipschitz semilinear systems }
\author[uol]{Rados{\l}aw Zawiski\corref{correspondingauthor}}
\ead{R.Zawiski@leeds.ac.uk}
\cortext[correspondingauthor]{Corresponding author}
\address[uol]{School of Mathematics, University of Leeds, LS2 9JT Leeds, UK}
\begin{abstract}
We present sufficient conditions for exact controllability of a semilinear infinite dimensional dynamical system. The system mild solution is formed by a noncompact semigroup and a nonlinear disturbance that does not need to be Lipschitz continuous. Our main result is based on a fixed point type application of the Schmidt existence theorem and illustrated by a nonlinear transport partial differential equation.
\end{abstract}

\begin{keyword}
exact controllability\sep nonlinear infinite dimensional dynamical system\sep Schmidt's existence theorem\sep  fixed point theory
\end{keyword}

\end{frontmatter}

\section{Introduction}

Controllability of nonlinear systems is a mature subject of research - see \cite{BalDau02,ObuZec09,CarQui84} and references therein. In recent years various applications of fixed point theorems are particularly popular among researchers tackling this problem. These range from classical Banach or  Schauder Fixed Point Theorems (FPT) to more specific, such as Nussbaum \cite{Nus69} FPT in \cite{CarQui84}, Schaefer \cite{Scha55} FPT in \cite{Choi_Sakthivel_2004} or M{\"o}nch \cite{Mon80} FPT in \cite{RavMach13}. A short survey on fixed point approaches is given in \cite{Zaw14}.

In most cases the nonlinearities present in (otherwise linear) systems are regarded as disturbances, in some way influencing the normal operation of a system. The choice of the fixed-point-type approach depends on the nature of nonlinearity and the structure of the system itself. The examples of such approach we particularly focus on, are given in \cite{CarQui84} and \cite{RavMach13}. In the former case the authors make use of the fact that the system under consideration can be represented as a sum of Lipschitz-type and compact operators, what allows them to apply the Nussbaum \cite{Nus69} fixed point theorem. In the latter case the authors examine the controllability conditions of a semilinear impulsive mixed Volterra-Fredholm functional integro-differential evolution differential system with finite delay and nonlocal conditions by means of measures of noncompactness and M{\"o}nch fixed point theorem \cite{Mon80}.

Interesting, however, is that although almost 30 years separate these articles they both contain assumption of the Lipschitz type of nonlinearity. The author of this article is actually not aware of any example of fixed point theorem application to the problem of exact controllability where the nonlinearity is not Lipschitz in some way. The reason for that is that with Lipschitz condition comes either computational 'smoothness', which goes back to existence results for initial value problem such as e.g. Picard-Lindel{\"o}f, or equicontinuity needed by the Ambrosetti Theorem to express the measure of noncompactness. 

For this reason we intentionally drop the Lipschitz condition. In this way this article combines and expands above results by means of the Schmidt existence theorem, originally developed for the Cauchy problem in Banach spaces \cite{Schmidt92}. This requires a reformulation of the theorem into the fixed point form. The set of assumptions is discussed and the results follow. In particular, we do not impose any compactness condition. The article is finished with an illustrative example.

\section{Preliminaries}

This section gives the basic definitions and background material. It also defines the notation. If for lemmas or theorems given without reference to a particular source the proof is short and simple, they are immediately followed by the $\square$ sign.
\subsection{On measures of noncompactness, one-sided Lipschitz condition and Schmidt Theorem}
\begin{lem}
Let $\Xi$ be a normed space and $x,y\in\Xi$. Then the real function $p:\mathbb{R}\rightarrow[0;+\infty)$, $p(t):=\|x+ty\|$ is convex.$\qed$
\end{lem}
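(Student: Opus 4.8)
The plan is to verify the defining inequality of convexity directly from the two structural properties of a norm, namely absolute homogeneity $\|\alpha z\|=\abs{\alpha}\,\|z\|$ and the triangle inequality $\|z+w\|\le\|z\|+\|w\|$. No topology, completeness, or finite dimensionality is needed, so the argument is purely algebraic. First I would fix arbitrary $t_1,t_2\in\mathbb{R}$ and a weight $\lambda\in[0,1]$, set $t:=\lambda t_1+(1-\lambda)t_2$, and aim to show $p(t)\le\lambda p(t_1)+(1-\lambda)p(t_2)$.

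The key step is the correct affine rewriting of the argument $x+ty$. Since $\lambda+(1-\lambda)=1$, I would split the constant term as $x=\lambda x+(1-\lambda)x$, which lets me express
\[
x+ty=\lambda(x+t_1y)+(1-\lambda)(x+t_2y).
\]
This identity is the genuine content of the proof: it turns the affine map $t\mapsto x+ty$ evaluated at a convex combination of $t_1,t_2$ into the corresponding convex combination of the vectors $x+t_1y$ and $x+t_2y$.

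From here the conclusion is immediate. Applying the triangle inequality to the right-hand side and then absolute homogeneity (with the nonnegative scalars $\lambda$ and $1-\lambda$, so $\abs{\lambda}=\lambda$ and $\abs{1-\lambda}=1-\lambda$) yields
\[
p(t)=\|x+ty\|\le\lambda\|x+t_1y\|+(1-\lambda)\|x+t_2y\|=\lambda p(t_1)+(1-\lambda)p(t_2),
\]
which is exactly convexity of $p$. The nonnegativity of the codomain $[0;+\infty)$ is automatic from the norm axioms and requires no separate verification.

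Honestly, there is no real obstacle here; the only point demanding a moment's care is remembering to decompose $x$ with the same weights $\lambda,1-\lambda$ rather than leaving it fixed, since otherwise the vector identity fails and the triangle inequality cannot be applied cleanly. This is why the lemma is of the type the paper flags as admitting an immediate $\qed$.
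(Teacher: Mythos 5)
Your proof is correct and is exactly the standard direct verification the paper has in mind when it marks this lemma with an immediate $\qed$: the affine identity $x+ty=\lambda(x+t_1y)+(1-\lambda)(x+t_2y)$ followed by the triangle inequality and absolute homogeneity. Nothing is missing, and no alternative route is suggested or needed.
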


\begin{cor}
If for given $t\in\mathbb{R}$ the left hand side derivative $p_{-}'$ of $p$ given by above Lemma exists at point $t$ and the right hand side derivative $p_{+}'$ also exists at point $t$,
then the inequality $p_{-}'(t)\leq p_{+}'(t)$ holds.$\qed$
\end{cor}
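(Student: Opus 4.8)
The plan is to exploit the convexity of $p$ established in the preceding Lemma together with a difference quotient centred symmetrically at $t$. First I would recall that the two one-sided derivatives are, by definition, the limits of difference quotients,
\[
p_{-}'(t)=\lim_{h\to 0^{+}}\frac{p(t)-p(t-h)}{h},\qquad
p_{+}'(t)=\lim_{h\to 0^{+}}\frac{p(t+h)-p(t)}{h},
\]
both of which exist by hypothesis. Since $p$ is defined on all of $\mathbb{R}$, the points $t-h$ and $t+h$ lie in its domain for every $h>0$, so these quotients are well defined.

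The key step is the observation that $t$ is the midpoint of $t-h$ and $t+h$, whence convexity of $p$ gives
\[
p(t)=p\!\left(\tfrac{1}{2}(t-h)+\tfrac{1}{2}(t+h)\right)\leq\tfrac{1}{2}\,p(t-h)+\tfrac{1}{2}\,p(t+h)
\]
for every $h>0$. Rearranging yields $p(t)-p(t-h)\leq p(t+h)-p(t)$, and dividing through by $h>0$ produces
\[
\frac{p(t)-p(t-h)}{h}\leq\frac{p(t+h)-p(t)}{h}.
\]

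Finally I would let $h\to 0^{+}$. The left- and right-hand sides converge to $p_{-}'(t)$ and $p_{+}'(t)$ respectively, both limits existing by assumption, and since passing to the limit preserves a weak inequality we obtain $p_{-}'(t)\leq p_{+}'(t)$, as claimed. I do not anticipate a genuine obstacle: the only point requiring care is that the assumed existence of the two one-sided derivatives is precisely what legitimises taking the limit of the difference-quotient inequality, so that the non-strict inequality survives the limiting process. No appeal to differentiability of $p$ at $t$ itself, nor to a three-chord estimate with unequal increments, is needed; the symmetric choice of the increment $h$ on both sides of $t$ is what makes the argument immediate.
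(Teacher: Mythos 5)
Your proof is correct and follows exactly the route the paper intends: the Corollary is stated as an immediate consequence of the convexity of $p$ from the preceding Lemma, and your symmetric midpoint inequality $p(t)\leq\tfrac{1}{2}p(t-h)+\tfrac{1}{2}p(t+h)$, rearranged and passed to the limit, is the standard one-line argument the paper leaves to the reader (marked $\qed$ as short and simple).
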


\begin{defn}[One-sided Lipschitz condition]\label{defn one-sided Lipschitz cond}
Let $\Xi$ be a Banach space and $x,y\in \Xi$. We define the symbol
$$[x,y]_{\pm}:=\lim_{h\rightarrow 0^{\pm}}\frac{\|x+hy\|-\|x\|}{h}$$
and say that a function $f:\Xi\rightarrow\Xi$ fulfills \textit{one-sided Lipschitz condition} (left "-" or right "+", respectively) if there exists a non-negative constant $M$ such that
$$[x-y,g(x)-g(y)]_{\pm}\leq M\|x-y\|$$
for any $x,y\in\Xi$.
\end{defn}

\begin{lem}\label{lem bracket properties}
If the limit in Definition~\ref{defn one-sided Lipschitz cond} exists, then $[x,y]_{-}=p_{-}'(0)$, $[x,y]_{+}=p_{+}'(0)$ and
\begin{itemize}
\item [a)]$[x,y]_{-}\leq [x,y]_{+}$
\item [b)]$|[x,y]_{\pm}|\leq \|y\|$
\item [c)]$[0,y]_{\pm}=\pm\|y\|$
\item [d)]$[x,y+z]_{\pm}\leq [x,y]_{\pm}+\|z\|$.
\end{itemize}
\end{lem}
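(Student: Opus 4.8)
The plan is to establish the two identifications first and then the four listed properties in turn, since each reduces to the definition of the bracket combined with elementary properties of the norm. For the identifications I would note that the function $p$ of the preceding Lemma satisfies $p(0)=\|x\|$ and $p(h)=\|x+hy\|$, so the difference quotient $\frac{p(h)-p(0)}{h}$ is exactly $\frac{\|x+hy\|-\|x\|}{h}$; taking the one-sided limits $h\to 0^{\pm}$, which exist by the standing hypothesis, yields $p_{\pm}'(0)=[x,y]_{\pm}$ immediately.

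For part a) I would invoke the Corollary stated above at the point $t=0$: since both $p_{-}'(0)$ and $p_{+}'(0)$ exist, the Corollary gives $p_{-}'(0)\leq p_{+}'(0)$, which is precisely $[x,y]_{-}\leq[x,y]_{+}$ once the identifications are in place.

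Parts b) and d) both rest on the reverse triangle inequality. For b), the bound $\big|\,\|x+hy\|-\|x\|\,\big|\leq\|hy\|=|h|\,\|y\|$ gives $\big|\frac{\|x+hy\|-\|x\|}{h}\big|\leq\|y\|$ for every $h\neq 0$, and passing to the limit preserves the estimate, so $|[x,y]_{\pm}|\leq\|y\|$. For d), I would compare the difference quotients for $y+z$ and for $y$: the inequality $\big|\,\|x+h(y+z)\|-\|x+hy\|\,\big|\leq|h|\,\|z\|$ shows, after dividing by $|h|$, that the difference of the two quotients is bounded in absolute value by $\|z\|$ for every $h\neq 0$; letting $h\to 0^{\pm}$ gives $\big|[x,y+z]_{\pm}-[x,y]_{\pm}\big|\leq\|z\|$, which in particular yields the one-sided inequality $[x,y+z]_{\pm}\leq[x,y]_{\pm}+\|z\|$ that is claimed.

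Part c) is a direct computation: with $x=0$ the quotient reduces to $\frac{\|hy\|}{h}=\frac{|h|}{h}\,\|y\|$, and since $|h|/h$ equals $+1$ for $h>0$ and $-1$ for $h<0$, the one-sided limits are $[0,y]_{+}=\|y\|$ and $[0,y]_{-}=-\|y\|$, i.e. $[0,y]_{\pm}=\pm\|y\|$. The only genuine subtlety throughout is the bookkeeping of the sign of $h$, which is what the $\pm$ in c) records and which would force a separate case split in d) were one to argue from the one-sided estimates directly; casting the comparison in d) in the symmetric absolute-value form above handles both signs at once and is the cleanest route.
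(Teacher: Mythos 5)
Your proposal is correct and follows essentially the same route as the paper: the paper proves only part d), via the triangle inequality applied to $x+h(y+z)=(x+hy)+hz$ followed by division by $h$ and passage to the one-sided limits, which is exactly your argument recast — your absolute-value formulation merely merges the paper's $h<0$ and $h>0$ case split into one estimate (and yields the slightly stronger two-sided bound $\bigl|[x,y+z]_{\pm}-[x,y]_{\pm}\bigr|\leq\|z\|$). Your explicit treatments of the identifications and of parts a)--c), including the appeal to the Corollary for a), correctly fill in what the paper dismisses as following directly from the definition.
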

\begin{proof}
Proofs of points a) to c) follow directly from the bracket definition above. We will show only the last one.
\begin{itemize}
\item [1.] Consider the case $h<0$:
$$\|x+h(y+z)\|\geq\|x+hy\|-\|hz\|=\|x+hy\|+h\|z\|$$
and the case $h>0$:
$$\|x+h(y+z)\|\leq\|x+hy\|+\|hz\|=\|x+hy\|+h\|z\|$$
\item [2.] In both estimations in 1. by subtracting $\|x\|$ from both sides and dividing, respectively, by $h<0$ or $h>0$ one obtains
$$\frac{\|x+h(y+z)\|-\|x\|}{h}\leq\frac{\|x+hy\|-\|x\|}{h}+\|z\|$$
\item [3.] Going to the limit in 2. the result follows.
\end{itemize}
\end{proof}

\begin{lem}\label{thm one-sided Lipshitz properties}
Let $\Xi$ be a Banach space, $f:\mathbb{R}\times\Xi\rightarrow\Xi$ and $M$ be a nonnegative constant. Introducing the notation
\begin{itemize}
\item [($l_{-}$)] $[x-y,f(t,x)-f(t,y)]_{-}\leq M\|x-y\| \quad \forall x,y\in\Xi$
\item [($l_{+}$)] $[x-y,f(t,x)-f(t,y)]_{+}\leq M\|x-y\| \quad \forall x,y\in\Xi$
\item [($l$)] $\|f(t,x)-f(t,y)\|\leq M\|x-y\| \quad \forall x,y\in\Xi$
\end{itemize}
the chain of implications $(l)\Rightarrow(l_{+})\Rightarrow(l_{-})$ is true.
\end{lem}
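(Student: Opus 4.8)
The plan is to obtain both implications directly from the bracket properties collected in Lemma~\ref{lem bracket properties}, applied for fixed $t\in\mathbb{R}$ with the first bracket slot filled by $x-y$ and the second by $f(t,x)-f(t,y)$. Before doing so I would note that the relevant one-sided brackets are well defined: the map $s\mapsto\|(x-y)+s(f(t,x)-f(t,y))\|$ is convex by the first Lemma of this subsection and finite on all of $\mathbb{R}$, so its one-sided derivatives at $0$ exist, and hence $[x-y,f(t,x)-f(t,y)]_{\pm}$ make sense. This is the only point requiring a moment's care; everything else is a two-line deduction.

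For the implication $(l_{+})\Rightarrow(l_{-})$ I would invoke part a) of Lemma~\ref{lem bracket properties}, $[u,v]_{-}\leq[u,v]_{+}$. Taking $u=x-y$ and $v=f(t,x)-f(t,y)$ gives $[x-y,f(t,x)-f(t,y)]_{-}\leq[x-y,f(t,x)-f(t,y)]_{+}$, and combining this with the hypothesis $(l_{+})$ yields $[x-y,f(t,x)-f(t,y)]_{-}\leq M\|x-y\|$, which is exactly $(l_{-})$.

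For the implication $(l)\Rightarrow(l_{+})$ I would use part b), $|[u,v]_{\pm}|\leq\|v\|$. With the same substitution this produces the estimate $[x-y,f(t,x)-f(t,y)]_{+}\leq|[x-y,f(t,x)-f(t,y)]_{+}|\leq\|f(t,x)-f(t,y)\|$, and the Lipschitz hypothesis $(l)$ then bounds the right-hand side by $M\|x-y\|$, giving $(l_{+})$. Since all three conditions quantify over the same $x,y\in\Xi$ and $t$ was arbitrary but fixed throughout, the two implications hold for every $t\in\mathbb{R}$, establishing the chain $(l)\Rightarrow(l_{+})\Rightarrow(l_{-})$.

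I do not expect a genuine obstacle here: the whole argument reduces to parts a) and b) of Lemma~\ref{lem bracket properties}, so the proof is short and the $\square$-sign convention mentioned in the preliminaries is appropriate. The result is essentially a statement that the one-sided Lipschitz conditions are successively weaker than the ordinary Lipschitz condition, which is precisely what makes them the natural hypotheses for the non-Lipschitz setting pursued in the rest of the paper.
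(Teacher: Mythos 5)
Your proposal is correct and follows essentially the same route as the paper: the paper's proof is the single chain $[x-y,f(t,x)-f(t,y)]_{-}\leq[x-y,f(t,x)-f(t,y)]_{+}\leq\|f(t,x)-f(t,y)\|\leq M\|x-y\|$, whose first inequality is part a) and second is part b) of Lemma~\ref{lem bracket properties}, exactly the two facts you invoke for the two implications. Your added remark on well-definedness of the brackets via convexity is a harmless (and reasonable) elaboration the paper leaves implicit.
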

\begin{proof}
Based on the previous lemma the following estimation holds
$$[x-y,f(t,x)-f(t,y)]_{-}\leq[x-y,f(t,x)-f(t,y)]_{+}\leq\|f(t,x)-f(t,y)\|\leq M\|x-y\|.$$
\end{proof}

\begin{lem}\label{lem one-sided Lipschitz in Hilbert space}
 Let $\Xi$ be a real inner product space. Then 
\begin{itemize}
 \item [a)] $[x,y]_{\pm}=\frac{1}{\norm{x}}\langle x,y\rangle\qquad \forall x,y\in\Xi,\ x\neq0.$
 \item [b)] Suppose $D\subset\mathbb{R}\times\Xi$ and $f:D\rightarrow\Xi$. Then both conditions
  \begin{itemize}
   \item [($l_\pm$)] $[x-y,f(t,x)-f(t,y)]_{\pm}\leq M\|x-y\|\qquad\forall (t,x),(t,y)\in D$
  \end{itemize}
  are equicvalent to 
  \begin{equation*}\label{eqn one-sided Lipschitz cond in Hilbert space}
    \langle x-y,f(t,x)-f(t,y)\rangle\leq M\norm{x-y}^2\qquad\forall (t,x),(t,y)\in D.
  \end{equation*}
\end{itemize}
\end{lem}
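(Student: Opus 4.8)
The plan is to reduce part b) entirely to part a), and to prove a) by exploiting the fact that in an inner product space the squared norm along a line is a quadratic polynomial in the parameter. First I would fix $x,y\in\Xi$ with $x\neq0$ and work with the function $p(h)=\|x+hy\|$ introduced before Definition~\ref{defn one-sided Lipschitz cond}. By Lemma~\ref{lem bracket properties} we have $[x,y]_{\pm}=p_{\pm}'(0)$, so it suffices to compute the one-sided derivatives of $p$ at the origin.

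The key computation is to write
$$p(h)^2=\langle x+hy,x+hy\rangle=\|x\|^2+2h\langle x,y\rangle+h^2\|y\|^2,$$
which is a polynomial in $h$ taking the value $\|x\|^2>0$ at $h=0$. Because $x\neq0$, the map $h\mapsto p(h)=\sqrt{\|x\|^2+2h\langle x,y\rangle+h^2\|y\|^2}$ is genuinely (two-sidedly) differentiable near $0$. Differentiating the identity $2p(h)p'(h)=2\langle x,y\rangle+2h\|y\|^2$ and evaluating at $h=0$ gives $p'(0)=\langle x,y\rangle/\|x\|$. Since $p$ is differentiable at $0$, both one-sided derivatives agree with $p'(0)$, and hence $[x,y]_-=[x,y]_+=\tfrac{1}{\|x\|}\langle x,y\rangle$, which is exactly a).

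For b) I would simply substitute. Fixing $(t,x),(t,y)\in D$ with $x\neq y$, applying a) to the vectors $x-y$ and $f(t,x)-f(t,y)$ gives
$$[x-y,f(t,x)-f(t,y)]_{\pm}=\frac{1}{\|x-y\|}\langle x-y,f(t,x)-f(t,y)\rangle.$$
Since $\|x-y\|>0$, multiplying the inequality $(l_\pm)$ through by $\|x-y\|$ turns it into the inner-product inequality, while dividing the inner-product inequality by $\|x-y\|$ reverses the step; thus the two are equivalent for $x\neq y$. The degenerate case $x=y$ is handled trivially, since both the bracket condition (using $[0,\cdot]_{\pm}=\pm\|\cdot\|=0$ from Lemma~\ref{lem bracket properties}c) and the inner-product inequality reduce to $0\leq0$. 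As a by-product, $(l_+)$ and $(l_-)$ collapse to the \emph{same} condition in the inner product setting, because the two one-sided brackets coincide by a).

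There is essentially no genuine obstacle here; the only point requiring care is the restriction $x\neq0$ in a), which is precisely what guarantees $p(0)>0$ and therefore the differentiability of the square root at the origin (at $x=0$ the bracket still makes sense via Lemma~\ref{lem bracket properties}c, but the formula $\tfrac{1}{\|x\|}\langle x,y\rangle$ is undefined, which is why the statement excludes it).
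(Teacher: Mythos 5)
Your proof is correct and matches the paper's intent: the paper disposes of this lemma with the single line ``Proof follows from a straightforward calculation using the Definition~\ref{defn one-sided Lipschitz cond},'' and your computation of $p(h)^2=\|x\|^2+2h\langle x,y\rangle+h^2\|y\|^2$, differentiation at $h=0$, and the substitution $x\mapsto x-y$, $y\mapsto f(t,x)-f(t,y)$ (with the degenerate case $x=y$ checked separately) is exactly that calculation, written out in full. No discrepancy in approach or content.
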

\begin{proof}
Proof follows from a straightforward calculation using the Definition \ref{defn one-sided Lipschitz cond}.
\end{proof}

Before proceeding further we recall 
\begin{defn}[Diameter of a set]\label{defn diameter of a set}
 Let $ \Xi $ be a metric space with a metric $ \rho $.  The \textit{diameter of a set} $A\subseteq\Xi$ is defined as
\[
\diam A:=\sup_{x,y\in A} \rho(x,y)\leq\infty 
\]
For the case of an empty set we take $\diam\emptyset=0$.
\end{defn}

We can now introduce one of the most commonly used measures of noncompactness (MNC), namely
\begin{defn}[Kuratowski measure of noncompactness \cite{Kuratowski_1930}]\label{defn Kuratowski mnc}
For a bounded subset $ A $ of a metric space $ \Xi $ we call
$$ \alpha(A):=\inf\{\delta\geq0:A\subseteq\bigcup_{i=1}^n A_{i};\ \diam A_{i}\leq\delta,\ i=1,\dots,n;\ n\in N \} $$
the \textit{Kuratowski MNC}.
\end{defn}

%

The Kuratowski MNC has properties given by the following
\begin{thm}[Properties of the Kuratowski MNC \cite{Darbo_1955,Appell_2005}]\label{thm mnc properties}
For bounded $A,B\subseteq\Xi$ and $\alpha$ MNC we have
\begin{itemize}
\item [a)]$\alpha(A)\leq \diam A$
\item [b)]if $A\subseteq B$ then $\alpha(A)\leq\alpha(B)$
\item [c)]$\alpha(A\cup B)=\max\{\alpha(A),\alpha(B)\}$
\item [d)]$\alpha(\cl A)=\alpha(A) $ where $\cl$ stands for closure
\item [e)]if $\Xi$ is a normed space and $\dim\Xi=\infty$ then $0\leq\alpha(B(0,1))\leq2$
\end{itemize}
\end{thm}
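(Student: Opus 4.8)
The plan is to read every property directly off the definition of $\alpha(A)$ as an infimum over finite covers of $A$ by sets of diameter at most $\delta$. Parts a) and b) are then immediate. For a), the trivial one-set cover $\{A\}$ itself shows that $\delta=\diam A$ is admissible in the infimum, so $\alpha(A)\le\diam A$. For b), whenever $A\subseteq B$, any finite cover of $B$ by sets of diameter at most $\delta$ is in particular a cover of $A$; hence every $\delta$ admissible for $B$ is admissible for $A$, and passing to the infima gives $\alpha(A)\le\alpha(B)$.

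For c) I would treat the two inequalities separately. The bound $\max\{\alpha(A),\alpha(B)\}\le\alpha(A\cup B)$ is just b) applied to the inclusions $A\subseteq A\cup B$ and $B\subseteq A\cup B$. For the reverse, fix $\varepsilon>0$ and choose a finite cover $\{A_i\}$ of $A$ with $\diam A_i\le\alpha(A)+\varepsilon$ and a finite cover $\{B_j\}$ of $B$ with $\diam B_j\le\alpha(B)+\varepsilon$. Their union is a finite cover of $A\cup B$ all of whose cells have diameter at most $\max\{\alpha(A),\alpha(B)\}+\varepsilon$, so $\alpha(A\cup B)\le\max\{\alpha(A),\alpha(B)\}+\varepsilon$, and letting $\varepsilon\to0$ finishes the case.

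For d), monotonicity b) gives $\alpha(A)\le\alpha(\cl A)$ at once. The reverse inequality rests on the elementary but essential fact that $\diam\cl C=\diam C$ for any bounded $C$, which I would isolate as a preliminary lemma: the inequality $\ge$ is monotonicity of the diameter, and $\le$ follows from continuity of the metric, since approximating $x,y\in\cl C$ by sequences in $C$ yields $\rho(x,y)\le\diam C$. Granting this, a finite cover $\{A_i\}$ of $A$ with $\diam A_i\le\delta$ produces the finite cover $\{\cl A_i\}$ of $\cl A$ — because the closure of a finite union is the union of the closures — with $\diam\cl A_i=\diam A_i\le\delta$, whence $\alpha(\cl A)\le\alpha(A)$.

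Finally, for e) the nonnegativity is built into the infimum, and the upper bound $\alpha(B(0,1))\le2$ is part a) combined with $\diam B(0,1)=2$; neither of these uses the dimension hypothesis. The genuinely infinite-dimensional content — that $\alpha(B(0,1))$ does not collapse to $0$ as it does in finite dimensions, where the closed ball is compact — is the step I expect to be the \emph{main obstacle}. The standard route is Riesz's lemma, which furnishes an infinite sequence in $B(0,1)$ with pairwise distances bounded below; no finite cover by cells of small diameter can then separate these points, forcing $\alpha(B(0,1))>0$ (in fact equal to $2$). It is precisely at this lower estimate that the assumption $\dim\Xi=\infty$ is indispensable.
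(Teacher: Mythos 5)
Your proof is correct in all five parts, but note that there is nothing in the paper to compare it against: the theorem is quoted from the literature (the cited works of Darbo and of Appell) and the paper supplies no proof of its own, so your self-contained covering arguments are a genuine supplement rather than an alternative route. Parts a)--d) are handled exactly as the standard references do: the one-set cover for a); the observation that any finite cover of $B$ is a fortiori a cover of $A\subseteq B$ for b); the union-of-two-covers plus $\varepsilon$ argument for c); and, for d), the preliminary fact $\diam\cl C=\diam C$ (continuity of the metric) combined with $\cl\big(\bigcup_{i=1}^{n}A_i\big)=\bigcup_{i=1}^{n}\cl A_i$ for finite unions. Your reading of e) is also the sharp one: as literally stated, $0\le\alpha(B(0,1))\le2$ follows from nonnegativity of the infimum and part a) alone, and the hypothesis $\dim\Xi=\infty$ plays no role in what the theorem actually asserts. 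One caveat on your closing remark: Riesz's lemma produces a sequence in the unit ball with pairwise distances at least $1-\varepsilon$, and the pigeonhole argument then gives only $\alpha(B(0,1))\ge1$; the stronger equality $\alpha(B(0,1))=2$ in infinite dimensions is the Furi--Vignoli theorem and needs a genuinely deeper argument (e.g. via the Ljusternik--Schnirelmann--Borsuk antipodal theorem). Since that equality is not claimed by the statement, this overstatement sits outside your proof proper and does not affect its correctness.
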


Additionally, if $\Xi$ is a Banach space, than the following Theorem is true \cite{Banas_Goebel}.
\begin{thm}\label{thm mnc properties B-space}
For bounded subsets $A,B$ of a Banach space $\Xi$, a constant $m$ and a MNC $\alpha$ there is
\begin{itemize}
\item [f)]$\alpha(A)=0$ if and only if $A$ is relatively compact
\item [g)]$\alpha(A+B)\leq\alpha(A)+\alpha(B)$
\item [h)]$\alpha(mA)=|m|\alpha(A)$
\item [i)]$\alpha(\conv A)=\alpha(A)$ where $\conv$ stands for convex hull
\end{itemize}
\end{thm}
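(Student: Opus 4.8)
The plan is to establish the four properties (f)--(i) separately, drawing on the covering definition of $\alpha$ in Definition~\ref{defn Kuratowski mnc} and on the elementary properties (a)--(e) already recorded in Theorem~\ref{thm mnc properties}. Properties (f), (g) and (h) are short, so I would dispatch them quickly and reserve most of the effort for the convex-hull invariance (i), which is where the real work lies.

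For (f) I would identify the condition $\alpha(A)=0$ with total boundedness: $\alpha(A)=0$ means that for every $\varepsilon>0$ the set $A$ admits a finite cover by sets of diameter at most $\varepsilon$, which is precisely total boundedness of $A$; since $\Xi$ is complete, this is equivalent to $\cl A$ being compact, i.e.\ to $A$ being relatively compact. For (g), given covers $A\subseteq\bigcup_{i=1}^n A_i$ and $B\subseteq\bigcup_{j=1}^m B_j$ with $\diam A_i\leq\alpha(A)+\varepsilon$ and $\diam B_j\leq\alpha(B)+\varepsilon$, the finite family $\{A_i+B_j\}$ covers $A+B$, and the triangle inequality gives $\diam(A_i+B_j)\leq\diam A_i+\diam B_j$; letting $\varepsilon\to0$ yields the claim. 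For (h) I would use the scaling identity $\diam(mC)=\abs{m}\diam C$: a cover of $A$ by sets $A_i$ scales to a cover of $mA$ by the sets $mA_i$, giving $\alpha(mA)\leq\abs{m}\alpha(A)$, and applying the same estimate to $m^{-1}$ (for $m\neq0$) with $mA$ in place of $A$ yields the reverse inequality; the case $m=0$ is trivial.

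The main obstacle is (i). One inequality is immediate: $A\subseteq\conv A$ together with monotonicity (b) gives $\alpha(A)\leq\alpha(\conv A)$. For the reverse $\alpha(\conv A)\leq\alpha(A)$, I would fix $\varepsilon>0$ and a finite cover $A\subseteq\bigcup_{i=1}^n A_i$ with $\diam A_i\leq\alpha(A)+\varepsilon$, and first replace each $A_i$ by its convex hull $C_i:=\conv A_i$, noting that $\diam C_i=\diam A_i$ since the supremum of the distance from a fixed point to a convex hull, and then from a generating point to that hull, is attained at the generating points. Then $\conv A$ is contained in $\conv\bigl(\bigcup_i C_i\bigr)=\bigl\{\sum_{i=1}^n\lambda_i c_i:\lambda\in T,\ c_i\in C_i\bigr\}$, where $T$ is the standard simplex. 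The key step is to exploit the compactness of $T$: covering $T$ by finitely many pieces $T_1,\dots,T_p$ of small $\ell^1$-diameter and setting $S_s:=\{\sum_i\lambda_i c_i:\lambda\in T_s,\ c_i\in C_i\}$, I would estimate, for two points of $S_s$, $\norm{\sum_i\lambda_i c_i-\sum_i\mu_i c_i'}\leq\diam\bigl(\sum_i\lambda_i C_i\bigr)+\sum_i\abs{\lambda_i-\mu_i}\norm{c_i'}$. The first term is at most $\sum_i\lambda_i\diam C_i\leq\alpha(A)+\varepsilon$, while the second is controlled by the (uniform) boundedness of the finitely many $C_i$ together with the smallness of the $\ell^1$-diameter of $T_s$. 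Hence each $S_s$ has diameter at most $\alpha(A)+2\varepsilon$, the finite family $\{S_s\}$ covers $\conv A$, and letting $\varepsilon\to0$ gives $\alpha(\conv A)\leq\alpha(A)$. I expect the delicate point to be making the simplex cover fine enough, uniformly over the bounded sets $C_i$, so that the second term is genuinely absorbed into $\varepsilon$.
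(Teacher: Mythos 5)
Your proof is correct, but note that there is nothing in the paper to compare it against: the theorem is stated with a citation to Banas--Goebel and no proof is given. What you have written is essentially the classical argument from that literature (the convex-hull invariance goes back to Darbo). Properties (f)--(h) follow routinely from the covering definition exactly as you say, with completeness of $\Xi$ correctly invoked in (f) and the $m^{-1}$ trick correctly supplying the reverse inequality in (h). Your treatment of (i) is also the standard one: replace a near-optimal cover $\{A_i\}$ by $C_i=\conv A_i$ using $\diam\conv S=\diam S$, identify $\conv\bigl(\bigcup_i C_i\bigr)$ with the simplex combinations $\bigl\{\sum_i\lambda_i c_i:\lambda\in T,\ c_i\in C_i\bigr\}$ (valid because each $C_i$ is convex, by the grouping argument), and use compactness of $T$ to control the dependence on $\lambda$. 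The ``delicate point'' you flag is in fact unproblematic: there are only finitely many $C_i$, each bounded since it has finite diameter, so $R:=\max_i\sup_{c\in C_i}\norm{c}<\infty$ and any cover of $T$ by sets of $\ell^1$-diameter at most $\varepsilon/R$ absorbs the second term, giving $\diam S_s\leq\alpha(A)+2\varepsilon$. One small wording correction: $\diam\conv S=\diam S$ holds because the supremum of the convex function $y\mapsto\norm{x-y}$ over $\conv S$ \emph{equals} its supremum over $S$; this supremum need not be attained, so ``attained at the generating points'' should read ``equal to the supremum over the generating points.''
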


In the sequel we will need \cite{Breckner_1984} the following  Definitions and the Theorem on equicontinuity of a set of functions.

\begin{defn}\label{defn ordered linear space}
 An \textit{ordered linear space} $Y$ is a space $Y$ on which there is defined a binary relation $\leq$ such that for all $x,y,z\in Y$ the following conditions are satisfied
\begin{itemize}
  \item[a)] $x\leq x$
  \item[b)] $x\leq y$ and $y\leq z$ imply $x\leq z$
  \item[c)] $x\leq y$ implies $x+z\leq y+z$
  \item[d)] $x\leq y$ implies $ax\leq ay$ for all real numbers $a>0$.
\end{itemize}
\end{defn}

\begin{defn}\label{defn wedge}
 A \textit{wedge} $C$ is a nonempty subset of a liner space $Y$ satisfying
\[
 aC+bC\subseteq C\qquad \forall a,b\in[0,\infty).
\]
A \textit{positive wedge} of an ordered linear space $Y$ is the set $Y_+$ of all elemets $x\in Y$ such that $0\leq x$, where $0$ denotes the zero element of $Y$.

We see that $Y_+$ is a wedge. Conversely, if $C$ is a wedge in a real linear space $Y$, then the binary relation $\leq$ given by
\begin{equation}\label{eqn oredering induced by C}
x\leq y\text{ if } y-x \in C
\end{equation}
satisfies all conditions in Definition \ref{defn ordered linear space} for all $x,y,z\in Y$, and in consequence makes $Y$ into an ordered linear space whose positive wedge is exactly $C$. The relation $\leq$ defined by \eqref{eqn oredering induced by C} is called \textit{the ordering induced by} $C$.

Let $Y$ be a topological linear space. Then $C$ is said to be a \textit{normal wedge} if for each neighbourhood $W$ of $0$ in $Y$ there exists a neighbourhood $V$ of $0$ in $Y$ such that 
\[
 (V-C)\cap(V+C)\subseteq W.
\]
\end{defn}

\begin{defn}\label{defn convex function}
 Let $M$ be a convex subset of a linear space $X$ and $Y$ be an ordered linear space. Then $f:M\rightarrow Y$ is called a \textit{convex function} when for all $a\in[0,1]$ and $x,y\in M$ the inequality
\[
 f(ax+(1-a)y)\leq af(x)+(1-a)f(y)
\]
holds. When the order on $Y$ is induced by a wedge $C$, the above can be written as 
\[
 af(x)+(1-a)f(y)\in f(ax+(1-a)y)+C.
\]
\end{defn}

The following Theorem, taken from \cite{Kosmol_et_al_1979}, is a generalization of the well known Banach-Steinhaus Theorem \cite[Theorem 2.5]{Rudin_fun}.

\begin{thm}\label{thm equicontinuity theorem by Kosmol}
Let $M$ be an open convex subset of a topological vector space $X$ of the second category, let $Y$ be a topological vector space ordered by a normal wedge $C$ and let $\mathcal{F}$ be a pointwise bounded family of continuous convex operators $f:M\rightarrow Y$. Then $\mathcal{F}$ is equicontinuous.
\end{thm}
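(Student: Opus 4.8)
The plan is to run a Banach--Steinhaus argument adapted to convex operators: use the second--category (Baire) property of $X$ to locate a neighbourhood of uniform order control, and use the normality of the wedge $C$ to turn that order control into topological smallness. Fix an arbitrary $a\in M$. Since $M$ is open and convex and $X$ is of the second category, the translate $U_{0}:=(M-a)\cap(a-M)$ is a convex, symmetric neighbourhood of $0$ in $X$ which is itself of the second category. For $f\in\mathcal F$ put $\Phi_{f}(u):=f(a+u)-f(a)$; by Definition~\ref{defn convex function} each $\Phi_{f}$ is a convex operator on $U_{0}$ with $\Phi_{f}(0)=0$. Equicontinuity of $\mathcal F$ at $a$ is precisely the statement that for every neighbourhood $W^{\star}$ of $0$ in $Y$ the set $\{\,u:\Phi_{f}(u)\in W^{\star}\ \forall f\,\}$ is a neighbourhood of $0$; as $a$ is arbitrary, establishing this for one fixed $a$ gives the theorem.

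The linchpin is the choice of a good target set in $Y$. Given $W^{\star}$, I would first use normality to pick a neighbourhood $V$ of $0$ with $(V-C)\cap(V+C)\subseteq W^{\star}$, and then choose inside it a neighbourhood $W\subseteq V$ of $0$ that is closed, balanced, convex and \emph{full} (order--convex: $p\le y\le q$ with $p,q\in W$ forces $y\in W$); such neighbourhoods form a base precisely because $C$ is a normal wedge and because $Y$ carries convex neighbourhoods. Note that $W\subseteq V$ transfers the interval inclusion, so every $y$ with $p\le y\le q$ and $p,q\in W$ lies in $W^{\star}$. Now for $n\in\mathbb N$ set
\[
A_{n}:=\{\,u\in U_{0}:\ \Phi_{f}(u)\in nW\ \text{and}\ \Phi_{f}(-u)\in nW\ \ \forall f\in\mathcal F\,\}.
\]
Each $A_{n}$ is closed (an intersection of preimages of the closed set $nW$ under the continuous maps $\Phi_{f}$) and symmetric, and it is \emph{convex}: for $u_{1},u_{2}\in A_{n}$, $\lambda\in[0,1]$ and $u_{\lambda}:=\lambda u_{1}+(1-\lambda)u_{2}$, convexity of $\Phi_{f}$ and of $nW$ give $\Phi_{f}(u_{\lambda})\le q$ and $\Phi_{f}(-u_{\lambda})\le q'$ with $q,q'\in nW$, whence $0=\Phi_{f}(0)\le\tfrac12\Phi_{f}(u_{\lambda})+\tfrac12\Phi_{f}(-u_{\lambda})$ forces $\Phi_{f}(u_{\lambda})\ge-q'$; since $W$ is balanced and full the sandwich $-q'\le\Phi_{f}(u_{\lambda})\le q$ returns $\Phi_{f}(u_{\lambda})\in nW$. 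Pointwise boundedness makes $\{\Phi_{f}(\pm u):f\in\mathcal F\}$ a bounded subset of $Y$ for each fixed $u\in U_{0}$, hence absorbed by $W$, so $\bigcup_{n}A_{n}=U_{0}$. As $U_{0}$ is of the second category, some $A_{n_{0}}$ fails to be nowhere dense and, being closed, has nonempty interior $G$; being convex and symmetric, $\tfrac12 G+\tfrac12(-G)\subseteq A_{n_{0}}$ is open and contains $0$, so $V_{1}:=A_{n_{0}}$ is a neighbourhood of $0$ in $X$.

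It remains to undo the inflation by a matched scaling. For $u\in V_{1}$ and $\lambda\in(0,1]$ the convex estimate $\Phi_{f}(\lambda u)\le\lambda\Phi_{f}(u)$ together with the reflected estimate $\Phi_{f}(\lambda u)\ge-\lambda\Phi_{f}(-u)$ sandwiches $\Phi_{f}(\lambda u)$ between $-\lambda\Phi_{f}(-u)$ and $\lambda\Phi_{f}(u)$, both of which lie in $\lambda n_{0}W$ (using that $\Phi_{f}(\pm u)\in n_{0}W$ and $W$ balanced). Choosing $\lambda=1/n_{0}$ places both endpoints in $W$, so the interval inclusion selected above yields $\Phi_{f}(u/n_{0})\in W^{\star}$ for every $u\in V_{1}$ and every $f$. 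Thus $U:=\tfrac1{n_{0}}V_{1}$ is a neighbourhood of $0$ with $\Phi_{f}(U)\subseteq W^{\star}$ for all $f\in\mathcal F$, i.e. $\mathcal F$ is equicontinuous at $a$; since $a$ was arbitrary, $\mathcal F$ is equicontinuous on $M$.

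The hard part is not the Baire application but the construction underlying the second paragraph: the entire argument rests on producing neighbourhoods $W$ that are \emph{simultaneously} full and convex. Fullness is exactly what the normal--wedge hypothesis supplies, and it is used twice --- to keep each $A_{n}$ convex via the internal order sandwich, and to close the final estimate through the interval inclusion. Convexity of $W$, on the other hand, is what makes each $A_{n}$ convex (so that an interior point can be promoted to a neighbourhood of $0$) and is the place where one must lean on the linear--topological structure of $Y$, in practice its local convexity. Reconciling the order structure (normality) with the linear topology (convex neighbourhoods) is the delicate point that has no counterpart in the classical linear Banach--Steinhaus theorem, where homogeneity of the operators makes the analogous ``barrel'' automatically convex and balanced.
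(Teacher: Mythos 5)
The paper itself gives no proof of this theorem --- it is quoted from the reference [Kosmol et al.\ 1979] as a known generalization of Banach--Steinhaus --- so your argument has to stand on its own, and it does not quite do so. The gap is the step you yourself flag as ``the delicate point'': you need a neighbourhood $W$ of $0$ in $Y$ that is simultaneously closed, balanced, \emph{full} and \emph{convex}. The theorem only assumes that $Y$ is a topological vector space ordered by a normal wedge; it does \emph{not} assume $Y$ locally convex, and normality does not manufacture convex neighbourhoods. Normality gives full neighbourhoods (the full hull $(V+C)\cap(V-C)$ of a balanced $V$ is full and balanced), and the full hull of a \emph{convex} set is convex --- but to start from a convex $V$ you need local convexity of $Y$. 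This is not a cosmetic issue in your proof: convexity of $W$ is precisely what makes each $A_n$ convex, and convexity of $A_{n_0}$ is what lets you promote its interior point to a neighbourhood of $0$. A concrete case covered by the statement but not by your argument is $Y=L^p(0,1)$ with $0<p<1$, ordered by the cone of a.e.\ nonnegative functions: this cone is normal (from $v_1\le z\le v_2$ a.e.\ one gets $|z|^p\le|v_1|^p+|v_2|^p$ a.e., hence a quasi-norm bound), yet the only nonempty open convex subset of $Y$ is $Y$ itself, so no $W$ of the kind you require exists and your sets $A_n$ cannot even be defined so as to be convex.

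The gap is repairable, and the repair shows that convexity of the $A_n$ was never needed. Keep your $A_n$, but with $W$ merely closed and balanced, chosen (via normality) so that the full hull of $W+W$ lies in the target $W^{\star}$: pick $V$ with $(V+C)\cap(V-C)\subseteq W^{\star}$ and then closed balanced $W$ with $W+W\subseteq V$. The $A_n$ are still closed, symmetric and cover $U_0$, so the Baire argument gives $n_0$, a point $x_0$ and a balanced neighbourhood $V_1$ of $0$ in $X$ with $x_0+V_1\subseteq A_{n_0}$. Now apply convexity of the operators directly through the identity $v=\tfrac12(x_0+v)+\tfrac12\bigl(-(x_0-v)\bigr)$: for $v\in V_1$ one has $\Phi_f(v)\le\tfrac12\Phi_f(x_0+v)+\tfrac12\Phi_f\bigl(-(x_0-v)\bigr)$, and both terms on the right lie in $n_0W$ because $x_0\pm v\in A_{n_0}$ and $A_{n_0}$ controls both signs; the same bound holds for $\Phi_f(-v)$, and $\Phi_f(v)\ge-\Phi_f(-v)$ exactly as in your sandwich. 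Thus $\Phi_f(\pm v)$ is order-bounded above by elements of $\tfrac{n_0}{2}(W+W)$ --- balancedness alone, no convexity of $W$ --- and your final rescaling with $\lambda=2/n_0$ places $\Phi_f(\lambda v)$ between two elements of $W+W\subseteq V$, hence in $W^{\star}$ by normality. So the theorem is true as stated; it is only your detour through convex ``barrels'' in $Y$ that silently strengthens the hypothesis to $Y$ locally convex. (A smaller point, true but worth a line of justification: that the open set $U_0$ inherits second category from $X$ uses the Banach category theorem for the topological group $X$.)
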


A generalization of Theorem \ref{thm equicontinuity theorem by Kosmol} to the class of \textit{s-convex functions}, containing a necessary and sufficient condition of equicontinuity, can be found in \cite{Breckner_1984}, and is further developed in \cite{Breckner_Trif_1999}.

In expressing MNC in function space a key role is played by the following
\begin{thm}[Ambrosetti Theorem \cite{Ambro67}]\label{thm Ambrosetti's MNC}
Suppose that $J$ is a compact interval, $\mathcal{F}\subset C(J,E)$, $E$ is a Banach space, $\mathcal{F}$ is bounded and equicontinuous. Then
$$\alpha(\mathcal{F})=\sup_{t\in J}\alpha\big(\mathcal{F}(t)\big)=\alpha\big(\mathcal{F}(J)\big).$$
\end{thm}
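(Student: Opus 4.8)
The plan is to prove the two easy inequalities
$$\sup_{t\in J}\alpha\big(\mathcal{F}(t)\big)\leq\alpha(\mathcal{F})\quad\text{and}\quad\sup_{t\in J}\alpha\big(\mathcal{F}(t)\big)\leq\alpha\big(\mathcal{F}(J)\big)$$
together with the reverse estimates $\alpha(\mathcal{F})\leq\sup_{t}\alpha(\mathcal{F}(t))$ and $\alpha(\mathcal{F}(J))\leq\sup_{t}\alpha(\mathcal{F}(t))$; chaining these four bounds forces all three quantities to coincide. Throughout I abbreviate $\eta:=\sup_{t\in J}\alpha(\mathcal{F}(t))$, which is finite because boundedness of $\mathcal{F}$ makes every slice $\mathcal{F}(t)$ a bounded subset of $E$.

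For the easy directions I would first record the auxiliary fact that a $1$-Lipschitz map $\phi$ does not increase the Kuratowski MNC: from a cover $A\subseteq\bigcup_i A_i$ with $\diam A_i\leq\delta$ one gets $\phi(A)\subseteq\bigcup_i\phi(A_i)$ with $\diam\phi(A_i)\leq\delta$, so $\alpha(\phi(A))\leq\alpha(A)$. Applying this to the evaluation $e_t\colon C(J,E)\to E$, $e_t(f)=f(t)$, which satisfies $\|e_t(f)-e_t(g)\|\leq\|f-g\|_\infty$, yields $\alpha(\mathcal{F}(t))=\alpha(e_t(\mathcal{F}))\leq\alpha(\mathcal{F})$ for every $t$, and the supremum gives $\eta\leq\alpha(\mathcal{F})$. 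Similarly, since $\mathcal{F}(t)\subseteq\mathcal{F}(J)$, monotonicity (Theorem~\ref{thm mnc properties}b) gives $\alpha(\mathcal{F}(t))\leq\alpha(\mathcal{F}(J))$, hence $\eta\leq\alpha(\mathcal{F}(J))$.

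The substantial step is the reverse estimate $\alpha(\mathcal{F})\leq\eta$, and this is where equicontinuity enters. Fix $\epsilon>0$. By equicontinuity of $\mathcal{F}$ on the compact interval $J$ I would choose nodes $t_0,\dots,t_n$ and a partition of $J$ into subintervals so that $\|f(s)-f(t_i)\|<\epsilon$ whenever $s$ lies in the subinterval containing $t_i$, uniformly in $f\in\mathcal{F}$. At each node the bound $\alpha(\mathcal{F}(t_i))\leq\eta$ furnishes a finite cover $\mathcal{F}(t_i)\subseteq\bigcup_j S_{i,j}$ with $\diam S_{i,j}\leq\eta+\epsilon$. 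I then split $\mathcal{F}$ into the finitely many sets indexed by the choice of one cover-piece at each node, namely $\{f\in\mathcal{F}:f(t_i)\in S_{i,j_i}\ \forall i\}$, and estimate the sup-norm diameter of each via the three-term decomposition $\|f(t)-g(t)\|\leq\|f(t)-f(t_i)\|+\|f(t_i)-g(t_i)\|+\|g(t_i)-g(t)\|<\epsilon+(\eta+\epsilon)+\epsilon$, where $t_i$ is the node of the subinterval containing $t$. This produces a finite cover of $\mathcal{F}$ by sets of diameter at most $\eta+3\epsilon$, so $\alpha(\mathcal{F})\leq\eta+3\epsilon$, and letting $\epsilon\to0$ closes the inequality.

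The remaining estimate $\alpha(\mathcal{F}(J))\leq\eta$ runs in parallel: with the same nodes and covers, the enlarged sets $S_{i,j}+B(0,\epsilon)$ form a finite cover of $\mathcal{F}(J)$ by sets of diameter at most $\eta+3\epsilon$, since any $f(t)$ with $t$ in the subinterval of $t_i$ lies within $\epsilon$ of $f(t_i)\in\bigcup_j S_{i,j}$. I expect the combinatorial bookkeeping of this uniform partition argument — propagating the finitely many pointwise covers into a single global sup-norm cover while keeping the diameter controlled by the equicontinuity modulus — to be the main obstacle; the measure-of-noncompactness manipulations themselves are routine.
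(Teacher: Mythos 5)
Your proposal is correct, and it is worth noting that it proves something the paper itself never proves: Theorem~\ref{thm Ambrosetti's MNC} is imported from \cite{Ambro67} as a ready-made tool, with no internal proof, so there is no in-paper argument to compare against. What you have written is the standard (Ambrosetti-style) proof, and all four inequalities are established soundly: the evaluation maps $e_t$ are $1$-Lipschitz and the Kuratowski MNC does not increase under $1$-Lipschitz images, giving $\sup_{t\in J}\alpha(\mathcal{F}(t))\leq\alpha(\mathcal{F})$; monotonicity gives $\sup_{t\in J}\alpha(\mathcal{F}(t))\leq\alpha(\mathcal{F}(J))$; and both reverse estimates follow from your uniform-partition/product-cover construction, whose diameter bookkeeping (the bound $\eta+3\epsilon$ in each case, with $\epsilon\to 0$ at the end) is exactly right. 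Two points deserve to be made explicit in a final write-up. First, you tacitly use that pointwise equicontinuity of $\mathcal{F}$ on the compact interval $J$ upgrades to \emph{uniform} equicontinuity; this is a routine compactness argument, but it is precisely the step that licenses a single finite partition of $J$ serving all $f\in\mathcal{F}$ simultaneously. Second, the cover of $\mathcal{F}$ by the sets $\{f\in\mathcal{F}:f(t_i)\in S_{i,j_i}\ \forall i\}$ consists of $\prod_{i} m_i$ pieces (where $m_i$ is the number of sets covering $\mathcal{F}(t_i)$), which is finite as required by Definition~\ref{defn Kuratowski mnc}, even though many pieces may be empty. With those two remarks spelled out, your argument is a complete and self-contained proof of the theorem.
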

We also make use of the following
\begin{defn}\label{defn compact operator}
Let $\Xi_{1}$ and $\Xi_{2}$ be metric spaces, $\Phi:\Xi_{1}\rightarrow\Xi_{2}$ be continuous and mapping bounded sets $A\subseteq\Xi_{1}$
onto bounded sets $\Phi(A)\subseteq\Xi_{2}$. If there exists a constant $M\geq0$ such that for every bounded $F\subset\Xi_{1}$ the inequality
$$\alpha(\Phi(F))\leq M\alpha(F) $$
holds, then $\Phi$ is called an $\alpha$\textit{-condensing operator} with constant $M$.
\end{defn}

The main tool we will use to prove our results is given by
\begin{thm}[Schmidt Theorem for the Initial Value Problem \cite{Schmidt92,Vol95}]\label{thm Schmidt ivp_existence}
Let $X$ be a Banach space and $T,\ M_{g},\ M_{k}$ be reals. Suppose $g,k:[0,T]\times X\rightarrow X$ are continuous, bounded and
\begin{itemize}
\item [a)]$[x_1-x_2,g(t,x_1)-g(t,x_2)]\_\leq M_{g}\|x_1-x_2\|\qquad\forall t\in[0,T],\ \forall x_1,x_2\in X$
\item [b)]$\alpha(k([0,T],D))\leq M_{k}\alpha(D)\qquad\forall D\subseteq X$, $D$ bounded.
\end{itemize}
Then the initial value problem (IVP)
\begin{equation}\label{eqn Schmidt_IVP}
\left\{\begin{array}{l}
        \frac{d}{dt}x(t)=g(t,x(t))+k(t,x(t)) \\
        x(0)=0
       \end{array}
\right.
\end{equation}

has a solution $x:[0,T]\rightarrow E$.
\end{thm}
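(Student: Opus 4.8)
The plan is to recast~\eqref{eqn Schmidt_IVP} as the integral equation $x(t)=\int_0^t[g(s,x(s))+k(s,x(s))]\,ds$ and to locate a fixed point in $C([0,T],X)$, but with the two nonlinearities handled by entirely different mechanisms. Since $g$ is only one-sided (not genuinely) Lipschitz, I cannot control the Kuratowski measure of the set $\{g(s,x(s))\}$, so \emph{all} the noncompactness must be confined to $k$. To achieve this I would first treat the dissipative part exactly: for a fixed forcing $w\in C([0,T],X)$ consider the auxiliary problem $y'(t)=g(t,y(t))+w(t)$, $y(0)=0$, denote its solution by $S(w)$, and observe that $x$ solves~\eqref{eqn Schmidt_IVP} precisely when $w$ is a fixed point of $\Psi(w)(t):=k(t,S(w)(t))$, with $x=S(w)$. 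This reduction isolates the compactness-critical term.

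Assumption (a) is exactly what makes the operator $S$ available without any compactness. Using convexity of the norm and the bracket estimates of Lemma~\ref{lem bracket properties}, for $h>0$ one gets $\|(x_1-x_2)-h(g(t,x_1)-g(t,x_2))\|\ge(1-hM_g)\|x_1-x_2\|$, so $I-h\,g(t,\cdot)$ is expansive and boundedly invertible for small $h$; this is the engine of the backward (implicit) Euler scheme that I would use to construct $S(w)$. The same one-sided condition also controls stability forward in time: the left-hand Dini derivative of $t\mapsto\|S(w_1)(t)-S(w_2)(t)\|$ coincides with the left bracket, so property (d) of Lemma~\ref{lem bracket properties} yields $D_-\|S(w_1)-S(w_2)\|\le M_g\|S(w_1)-S(w_2)\|+\|w_1-w_2\|$, and a one-sided differential inequality gives, on an interval of length $\tau$, a Lipschitz bound $\|S(w_1)-S(w_2)\|_\infty\le L(\tau)\|w_1-w_2\|_\infty$ with $L(\tau)=(e^{M_g\tau}-1)/M_g\to0$ as $\tau\to0$.

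With $S$ in hand the fixed point step is clean. Boundedness of $g$ and $k$ makes every image $S(w)$ and $\Psi(w)$ uniformly bounded and equicontinuous, so on the relevant closed, bounded, convex set Ambrosetti's Theorem~\ref{thm Ambrosetti's MNC} lets me compute $\alpha$ fibrewise. Because $S$ is Lipschitz as a map of $C([0,T],X)$, the definition of the Kuratowski measure immediately gives $\alpha(S(V))\le L(\tau)\,\alpha(V)$ for bounded $V$ (no estimate on $g$ is needed), and assumption (b) together with $\alpha(A(t))\le\alpha(A)$ yields $\alpha(\Psi(V))\le M_k\,L(\tau)\,\alpha(V)$. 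Choosing the subinterval length $\tau$ so small that $M_kL(\tau)<1$ makes $\Psi$ a strict set contraction there, and a classical Darbo-type fixed point theorem produces a fixed point $w^\ast$; continuing interval by interval (the boundedness of $g,k$ prevents blow-up) extends the solution to all of $[0,T]$, and $x=S(w^\ast)$ is the required solution.

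The step I expect to be the genuine obstacle is the \emph{existence} of $S(w)$ in an arbitrary Banach space. Continuity of $g$ alone is notoriously insufficient for solvability of $y'=g(t,y)+w$, and here no compactness is assumed on $g$ at all; the only thing available is the expansivity of $I-h\,g(t,\cdot)$, from which convergence of the backward Euler polygons (uniformly in $w$, and with the equicontinuity and boundedness needed to feed the later $\alpha$-computation) must be extracted. Managing this together with the delicate left-bracket/left-Dini-derivative comparison underlying the stability estimate is the analytic core of the argument, and is precisely the content provided by \cite{Schmidt92,Vol95}.
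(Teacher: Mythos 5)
First, be aware that the paper contains \emph{no proof} of Theorem~\ref{thm Schmidt ivp_existence}: it is quoted from \cite{Schmidt92,Vol95} and used as a black box, so your attempt can only be measured against the cited literature, not against anything in the text. Your overall architecture --- solve the dissipative part exactly to obtain a solution operator $S$, then run a Darbo-type set-contraction argument on $\Psi(w)=k(\cdot,S(w)(\cdot))$ --- is coherent, and your stability bound $\|S(w_1)-S(w_2)\|_\infty\le \frac{e^{M_g\tau}-1}{M_g}\|w_1-w_2\|_\infty$ is a correct consequence of hypothesis a) and Lemma~\ref{lem bracket properties}. The decisive gap is the \emph{existence} of $S(w)$, and the mechanism you propose for it fails. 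Your expansivity inequality is right (since $[u,-v]_+=-[u,v]_-\ge -M_g\norm{u}$), but expansivity of $I-hg(t,\cdot)$ gives injectivity and a continuous inverse \emph{on its range}, not surjectivity: in an infinite-dimensional Banach space a continuous expansive map can miss most of the space (the right shift on $\ell^2$ is an isometry onto a proper closed subspace), and hypothesis a), being a one-sided bound in the \emph{weaker} minus bracket, supplies no range condition. Hence the backward Euler step $(I-hg(t_{n+1},\cdot))\,y_{n+1}=y_n+hw(t_{n+1})$ may be unsolvable. This is not a removable technicality: continuity alone gives no local existence in infinite dimensions (Dieudonn\'e, Godunov counterexamples), and the known way to convert a one-sided Lipschitz condition into existence (Martin's theorem) runs through \emph{forward} polygonal approximate solutions whose convergence --- not compactness --- is extracted from dissipativity, a genuinely delicate argument which is, in substance, the theorem you are asked to prove. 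Deferring exactly this step to \cite{Schmidt92,Vol95} makes the proposal circular.

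There is a second gap, in the $\alpha$-estimate. The identity $\alpha(\Psi(V))=\sup_{t}\alpha\big(\Psi(V)(t)\big)$ requires Ambrosetti's Theorem~\ref{thm Ambrosetti's MNC}, hence equicontinuity of the family $\{t\mapsto k(t,S(w)(t)):w\in V\}$. The inner functions $S(w)$ are indeed equibounded and equicontinuous, but $k$ is only continuous, and a continuous map on an infinite-dimensional space need not be uniformly continuous on bounded sets; since the values $S(w)(t)$, $w\in V$, are precisely \emph{not} confined to a compact set (otherwise there would be nothing to prove), equicontinuity of the composed family fails in general. (Your fibrewise inequality $\alpha\big(\Psi(V)(t)\big)\le M_kL(\tau)\alpha(V)$ is fine; it is the passage from the fibres back to the function space that is unjustified.) Note that this is exactly the difficulty the paper itself confronts in Proposition~\ref{prop H3-H5}, where equicontinuity is not free but is purchased with the convexity hypothesis (H5) via Theorem~\ref{thm equicontinuity theorem by Kosmol}; the standard alternative is to route the measure-of-noncompactness estimate through an integral operator (whose image is automatically equicontinuous, the integrands being bounded) combined with a convex-hull lemma such as Lemma~\ref{lem integral operator properties}, or through Heinz--M\"onch type integral inequalities for $\alpha$ --- machinery your outline does not contain.
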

A function $g$ with properties as above will be called \textit{dissipative with constant $M_g$}, or \textit{dissipative with $M_g$} for short, and a function $k$ with properties as above will be called \textit{condensing with constant $M_k$} or \textit{condensing with $M_k$}.

We will use an integral form of~\eqref{eqn Schmidt_IVP}, as it better suits our needs, that is
\begin{equation}\label{eqn Schmidt integral form}
x(t)=\int_{0}^{t}g\big(s,x(s)\big)ds+\int_{0}^{t}k\big(s,x(s)\big)ds,\quad t\in [0,T],
\end{equation}
as every solution of~\eqref{eqn Schmidt_IVP} is also a solution of~\eqref{eqn Schmidt integral form}. 
\subsection{On dynamical systems}
From this point onward we drop the general Banach space setting. Although some of the definitions make sense and the results are true, the Hilbert space setting allows us to obtain more concrete results. Hence, throughout the rest of this paper, $X$ and $U$ are Hilbert spaces which are identified with their duals.  For the whole remaining part $J:=[0,T]$ is a compact interval.

We will also use the Sobolev space of vector valued functions 
\[
H^1(J,X)=W^{1,2}(J,X):=\{f\in L^2(J,X):\frac{d}{dt}f(t)\in L^2(J,X)\}.
\]
Let $A:D(A)\rightarrow X$ be a densly defined, linear, closed and unbounded operator on which the Cauchy problem of interest is based. Before introducing the Cauchy problem formally we describe the setting in which it will be considered.

 Basic properties of a generator of a strongly continuous semigroup are gathered in the proposition below \cite[Theorem 1.2.4]{Pazy}:

\begin{prop}\label{prop semigroup generator properties}
Let $(Q(t))_{t\geq0}$ be a strongly continuous semigroup and let $(A,D(A))$ be its generator. Then
\begin{itemize}
  \item[a)] there exist constants $\omega\geq0$ and $M\geq1$ such that for every $t\geq0$ there is 
    \[
    \norm{Q(t)}\leq Me^{\omega t},
    \]
  \item[b)] for every $x\in X$ the function $t\mapsto Q(t)x$ is continuous from $[0,\infty)$ into $X$,
  \item[c)] for every $x\in X$
    \[
     \lim_{h\rightarrow0}\frac{1}{h}\int_{t}^{t+h}Q(s)xds=Q(t)x,
    \]
  \item[d)] for every $x\in X$ there is $\int_{0}^{t}Q(s)xds\in D(A)$ and 
\[
 A\int_{0}^{t}Q(s)xds=Q(t)x-x,
\]
  \item[d)] for every $x\in D(A)$ there is $Q(t)x\in D(A)$ and
\[
 \frac{d}{dt}Q(t)x=AQ(t)x=Q(t)Ax,
\]
\item[e)] for every $x\in D(A)$
\[
 Q(t)x-Q(s)x=\int_{s}^{t}Q(\tau)Axd\tau=\int_{s}^{t}AQ(\tau)xd\tau.
\]
\end{itemize}
\end{prop}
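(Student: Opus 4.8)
The plan is to treat this as the standard chain of semigroup identities and to prove the parts in the order (a)$\to$(b)$\to$(c)$\to$(d)$\to$[second (d)]$\to$(e), since each step feeds the next. The only genuinely non-elementary ingredient is the growth bound (a); everything afterwards is a sequence of elementary estimates capped by the fundamental theorem of calculus for vector-valued functions. The hard part, and the place I would spend the most care, is the \emph{local boundedness} lemma hidden inside (a), because it is the one spot where a nontrivial functional-analytic principle is unavoidable and it supplies the uniform control on which every later estimate silently depends.

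First I would establish local boundedness: there exist $\delta>0$ and $M\geq1$ with $\norm{Q(t)}\leq M$ on $[0,\delta]$. I would argue by contradiction: if no such bound existed there would be a sequence $t_n\to0^+$ with $\norm{Q(t_n)}\to\infty$, and the uniform boundedness principle --- the linear Banach--Steinhaus theorem, of which Theorem~\ref{thm equicontinuity theorem by Kosmol} is a convex generalization --- would furnish some $x$ with $\sup_n\norm{Q(t_n)x}=\infty$; this contradicts strong continuity, which forces $Q(t_n)x\to Q(0)x=x$. With local boundedness in hand I would write an arbitrary $t\geq0$ as $t=n\delta+\tau$ with $0\leq\tau<\delta$, factor $Q(t)=Q(\tau)Q(\delta)^n$ by the semigroup law, and estimate $\norm{Q(t)}\leq M^{n+1}\leq M e^{\omega t}$ with $\omega:=\delta^{-1}\ln M\geq0$, which is (a).

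Parts (b) and (c) are then routine. For (b), given $t>0$ and $h>0$ I would write $Q(t+h)x-Q(t)x=Q(t)\bigl(Q(h)x-x\bigr)$ and $Q(t)x-Q(t-h)x=Q(t-h)\bigl(Q(h)x-x\bigr)$, and use the bound from (a) together with strong continuity at the origin to send both to $0$, thereby upgrading right-continuity at $0$ to continuity on all of $[0,\infty)$. Part (c) is the assertion that a continuous integrand equals the limit of its averages: I would bound $\norm{h^{-1}\int_t^{t+h}Q(s)x\,ds-Q(t)x}$ by $\sup_{s}\norm{Q(s)x-Q(t)x}$ taken over the shrinking interval and invoke (b).

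For the first (d) I would set $y:=\int_0^tQ(s)x\,ds$ and compute the difference quotient $h^{-1}(Q(h)y-y)$; pushing the bounded operator $Q(h)$ inside the integral and changing variables turns it into $h^{-1}\bigl(\int_t^{t+h}Q(s)x\,ds-\int_0^hQ(s)x\,ds\bigr)$, which by (c) tends to $Q(t)x-x$ as $h\to0^+$, so that $y\in D(A)$ with $Ay=Q(t)x-x$. For the second (d), with $x\in D(A)$ I would pull $Q(t)$ through the limit in $h^{-1}(Q(h)-I)Q(t)x=Q(t)\,h^{-1}(Q(h)-I)x\to Q(t)Ax$, which simultaneously shows $Q(t)x\in D(A)$, gives $AQ(t)x=Q(t)Ax$, and identifies the right derivative of $\tau\mapsto Q(\tau)x$; the matching left derivative follows from the factorization $h^{-1}(Q(t)x-Q(t-h)x)=Q(t-h)\,h^{-1}(Q(h)-I)x$ together with the uniform bound of (a). Finally (e) is the fundamental theorem of calculus applied to the now continuously differentiable map $\tau\mapsto Q(\tau)x$, whose derivative $Q(\tau)Ax=AQ(\tau)x$ is continuous in $\tau$.
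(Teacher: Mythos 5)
Your proof is correct, but note that the paper itself contains no proof of this proposition: it is imported verbatim as background material, cited as Theorem 1.2.4 of Pazy, so there is no internal argument to compare against. What you have written is essentially the canonical proof from that source --- local boundedness via the uniform boundedness principle and strong continuity, the factorization $Q(t)=Q(\tau)Q(\delta)^n$ for the exponential bound, then the standard chain of estimates: semigroup-law factorizations for (b), averaging of a continuous function for (c), the change-of-variables difference quotient for the first (d), the two-sided derivative argument (correctly using the uniform bound from (a) on the left difference quotient) for the second (d), and the fundamental theorem of calculus for (e) --- and every step is carried out correctly.
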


The operator $A^*:D(A^*)\rightarrow X$ is the adjoint of $A$. Important properties of the adjoint are summarized in the following remark \cite[Chapter 2.8]{Tucsnak_Weiss}.
\begin{rem}\label{rem properties of adjoint}
Let $A:D(A)\rightarrow X$ be a densely defined operator with $s\in\rho(A)$. The following holds
\begin{itemize}
 \item[(i)] If $A$ is closed (as $\rho(A)$ is not empty) we conclude that  $A^*$ is also closed, densely defined on $X$ and $A^{**}=A$.
 \item[(ii)] There is $\bar{s}\in\rho(A^*)$ and $[(sI-A)^{-1}]^*=(\bar{s}I-A^*)^{-1}$.
\item[(iii)] Let $(Q(t))_{t\geq0}$ be a strongly continuous semigroup on $X$. Then $(Q^*(t))_{t\geq0}$ is also a strongly continuous semigroup on $X$ and its generator is $A^*$.
\end{itemize}
 
\end{rem}

To overcome certain difficulties with unboundedness of the generator $A$, we make use of the duality with respect to a pivot space. In general, the idea of (duality with respect to) a pivot space can be described as follows. Having an unbounded closed linear operator $A:D(A)\rightarrow X$ with $D(A)\subset X$ densely, we want to establish a setting where it behaves much like a bounded one but defined on a Banach space. One instance of such situation is when we restrict ourselves to the space made out of its domain, but equipped with a graph (or graph-equivalent) norm. It is then reasonable to ask what is the dual of such space. It turns out that it can be represented as a completion of the original space $X$ with a resolvent--induced norm.  As the space $X$ is pivotal in the described setting, the name follows. A precise description of such situation can be found in \cite[Chapter 2.9]{Tucsnak_Weiss} or in \cite[Chapter II.5]{Engel_Nagel}

The following three propositions from \cite[Chapter 2.10]{Tucsnak_Weiss} introduce duality with respect to a pivot space  (sometimes referred to also as a rigged Hilbert space construction) in the context which we will use later.

\begin{prop}\label{prop X_1 space}
Let $A:D(A)\rightarrow X$ be a densely defined operator with $\rho(A)\neq\emptyset$. Then for every $\beta\in\rho(A)$ the space $(D(A),\|\cdot\|_{1})$, where
  \begin{equation}\label{eqn defn X_1 norm}
  \|z\|_{1}:=\|(\beta I-A)z\|_{X}\qquad \forall z\in D(A),
  \end{equation}
is a Hilbert space denoted $X_1$. The norms generated as above for different $\beta\in\rho(A)$ are equivalent to the graph norm. The embedding $X_{1}\subset X$ is continuous. If $Q(t)$ is the semigroup generated by $A$ then $Q(t)\in\mathcal{L}(X_{1})$ for every $t\in[0,\infty)$.

\end{prop}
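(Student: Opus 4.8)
The plan is to exploit the fact that, since $\beta\in\rho(A)$ and $A$ is closed, the operator $\beta I-A$ is a bijection from $D(A)$ onto $X$ whose inverse $(\beta I-A)^{-1}$ belongs to $\mathcal{L}(X)$; all four assertions then reduce to transporting the Hilbert structure of $X$ back to $D(A)$ along this bijection. First I would check that $\|\cdot\|_1$ is a genuine norm: injectivity of $\beta I-A$ gives $\|z\|_1=0\Rightarrow z=0$, while homogeneity and the triangle inequality are inherited from $\|\cdot\|_X$. To see the norm is Hilbertian, I would define
$$\langle z,w\rangle_1:=\langle(\beta I-A)z,(\beta I-A)w\rangle_X,$$
which is an inner product on $D(A)$ (positive definiteness again coming from injectivity) inducing $\|\cdot\|_1$. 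By construction $\beta I-A:(D(A),\|\cdot\|_1)\to(X,\|\cdot\|_X)$ is an isometry, and it is onto because $\beta\in\rho(A)$; hence it is an isometric isomorphism of inner product spaces. Completeness of $(D(A),\|\cdot\|_1)$ then follows from completeness of $X$: a $\|\cdot\|_1$-Cauchy sequence $(z_n)$ is sent to an $X$-Cauchy sequence $((\beta I-A)z_n)$, whose limit $y$ pulls back to $z:=(\beta I-A)^{-1}y\in D(A)$ with $\|z_n-z\|_1=\|(\beta I-A)z_n-y\|_X\to0$.

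For the equivalence with the graph norm $\|z\|_{gr}:=\|z\|_X+\|Az\|_X$, I would establish a two-sided estimate for a fixed $\beta$. One direction is immediate: $\|z\|_1\leq|\beta|\,\|z\|_X+\|Az\|_X\leq\max\{1,|\beta|\}\,\|z\|_{gr}$. For the reverse direction the boundedness of the resolvent is the workhorse: $\|z\|_X\leq\|(\beta I-A)^{-1}\|\,\|z\|_1$, and then $\|Az\|_X\leq|\beta|\,\|z\|_X+\|z\|_1\leq(|\beta|\,\|(\beta I-A)^{-1}\|+1)\|z\|_1$, so that $\|z\|_{gr}\leq C\|z\|_1$. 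The inequality $\|z\|_X\leq\|(\beta I-A)^{-1}\|\,\|z\|_1$ is exactly the continuity of the embedding $X_1\subset X$. Since each norm $\|\cdot\|_1$ (for whichever $\beta\in\rho(A)$) is equivalent to the one fixed graph norm, transitivity of norm equivalence yields mutual equivalence of the norms associated with different values of $\beta$.

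For the final claim $Q(t)\in\mathcal{L}(X_1)$ I would invoke the commutation property from Proposition~\ref{prop semigroup generator properties}: for $z\in D(A)$ one has $Q(t)z\in D(A)$ and $AQ(t)z=Q(t)Az$, so $Q(t)$ indeed maps $X_1$ into $X_1$. The key algebraic step is the intertwining relation
$$(\beta I-A)Q(t)z=\beta Q(t)z-Q(t)Az=Q(t)(\beta I-A)z,\qquad z\in D(A),$$
from which
$$\|Q(t)z\|_1=\|Q(t)(\beta I-A)z\|_X\leq\|Q(t)\|\,\|(\beta I-A)z\|_X\leq Me^{\omega t}\|z\|_1,$$
using the growth bound of Proposition~\ref{prop semigroup generator properties}a). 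Thus $Q(t)$ is $\|\cdot\|_1$-bounded with $\|Q(t)\|_{\mathcal{L}(X_1)}\leq Me^{\omega t}$, which proves $Q(t)\in\mathcal{L}(X_1)$.

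None of these steps is genuinely hard; the whole proposition is a transcription of the standard rigged-Hilbert-space construction. The only points demanding care are, first, justifying that $\beta\in\rho(A)$ together with closedness of $A$ forces $(\beta I-A)^{-1}\in\mathcal{L}(X)$ -- this is where the closed graph theorem enters and underlies both completeness and the norm estimates -- and, second, the intertwining identity, which is the single place where the semigroup structure (and not merely the Hilbert geometry) is used. I expect the latter to be the conceptual crux, since everything about $X_1$ as a space is essentially formal, whereas the $Q(t)$-invariance genuinely relies on $A$ commuting with its own semigroup on $D(A)$.
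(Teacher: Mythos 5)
Your proof is correct. Note that the paper offers no proof of this proposition at all---it is quoted verbatim from \cite[Chapter 2.10]{Tucsnak_Weiss}---and your argument (transporting the Hilbert structure of $X$ back to $D(A)$ along the isometric bijection $\beta I-A$, deducing graph-norm equivalence and continuity of the embedding from boundedness of the resolvent, and obtaining $Q(t)\in\mathcal{L}(X_{1})$ from the intertwining relation $(\beta I-A)Q(t)z=Q(t)(\beta I-A)z$ on $D(A)$) is precisely the standard proof given in that reference.
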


For $A$ as in Proposition~\ref{prop X_1 space} its adjoint $A^*$ has the same properties. Thus, we can define the space $X_{1}^{d}:=(D(A^*),\|\cdot\|_{1}^{d})$ with the norm
  \begin{equation}\label{eqn defn X_1^d norm}
  \|z\|_{1}^{d}:=\|(\bar{\beta}I-A^*)z\|_{X}\qquad \forall z\in D(A^*),
  \end{equation}
where $\beta\in\rho(A)$, and this is also a Hilbert space.

\begin{prop}\label{prop completion of X}
 Let $A:D(A)\rightarrow X$ be a densely defined operator and let $\beta\in\rho(A)$. We denote by $X_{-1}$ the completion of $X$ with respect to the norm 
  \begin{equation}\label{eqn defn X-1 norm}
  \|z\|_{-1}:=\|(\beta I-A)^{-1}z\|_{X}\qquad \forall z\in X.
  \end{equation}
Then the norms generated as above for different $\beta\in\rho(A)$ are equivalent (in particular $X_{-1}$ is independent of the choice of $\beta$). Moreover, $X_{-1}$ is the dual of $X_{1}^{d}$ with respect to the pivot space $(X,\|\cdot\|_X)$.

The semigroup $(Q(t))_{t\geq0}$ generated by $A$ has a unique extension $({Q}_{-1}(t))_{t\geq0}$ such that ${Q}_{-1}(t)\in\mathcal{L}(X_{-1})$ for every $t\in[0,\infty)$.
\end{prop}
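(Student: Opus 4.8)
The plan is to treat the three assertions in turn. The equivalence of the norms rests on the resolvent identity, the duality identification on the adjoint resolvent formula of Remark~\ref{rem properties of adjoint}, and the semigroup extension on the commutation of $Q(t)$ with the resolvent of $A$.

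For the first assertion, fix $\beta,\gamma\in\rho(A)$ and for $z\in X$ put $w:=(\beta I-A)^{-1}z$, so that $z=(\beta I-A)w$ and $\|z\|_{-1,\beta}=\|w\|_X$. Writing $\beta I-A=(\gamma I-A)+(\beta-\gamma)I$ and applying $(\gamma I-A)^{-1}$ gives $(\gamma I-A)^{-1}z=w+(\beta-\gamma)(\gamma I-A)^{-1}w$, whence $\|z\|_{-1,\gamma}\le\big(1+|\beta-\gamma|\,\|(\gamma I-A)^{-1}\|_{\mathcal{L}(X)}\big)\|z\|_{-1,\beta}$. Exchanging $\beta$ and $\gamma$ yields the reverse inequality, so both norms are equivalent on $X$ and induce the same completion $X_{-1}$.

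For the duality statement I would show that $z\mapsto\langle z,\cdot\rangle_X$ is an isometric isomorphism of $X_{-1}$ onto $(X_1^d)'$. The key computation uses $[(\bar\beta I-A^*)^{-1}]^*=(\beta I-A)^{-1}$ (Remark~\ref{rem properties of adjoint}). For $z\in X$ and $\phi\in D(A^*)=X_1^d$, set $\psi:=(\bar\beta I-A^*)\phi$, so that $\|\phi\|_1^d=\|\psi\|_X$; then $\langle z,\phi\rangle_X=\langle(\beta I-A)^{-1}z,\psi\rangle_X$, giving $|\langle z,\phi\rangle_X|\le\|z\|_{-1}\|\phi\|_1^d$, with equality for $\psi$ parallel to $(\beta I-A)^{-1}z$. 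Hence $z\mapsto\langle z,\cdot\rangle_X$ is isometric from $(X,\|\cdot\|_{-1})$ into $(X_1^d)'$ and, by density of $X$ in $X_{-1}$, extends to an isometry $\Lambda\colon X_{-1}\to(X_1^d)'$ with closed range. Surjectivity follows from density of the range: if $\phi\in X_1^d$ annihilates every $\langle z,\cdot\rangle_X$, then $\langle z,\phi\rangle_X=0$ for all $z\in X$ and therefore $\phi=0$, so by reflexivity of the Hilbert space $X_1^d$ the range is dense, hence all of $(X_1^d)'$.

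Finally, for the extension of the semigroup I would use that $Q(t)$ commutes with $(\beta I-A)^{-1}$. For $z\in X$ this yields $\|Q(t)z\|_{-1}=\|(\beta I-A)^{-1}Q(t)z\|_X=\|Q(t)(\beta I-A)^{-1}z\|_X\le\|Q(t)\|_{\mathcal{L}(X)}\|z\|_{-1}\le Me^{\omega t}\|z\|_{-1}$ by the growth bound of Proposition~\ref{prop semigroup generator properties}(a). Thus $Q(t)$ is $\|\cdot\|_{-1}$-bounded on the dense subspace $X$ and extends uniquely to $Q_{-1}(t)\in\mathcal{L}(X_{-1})$, the semigroup law and strong continuity passing to the extension by density. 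I expect the duality part to be the main obstacle: the estimates of the first and third assertions are routine consequences of the resolvent identity and of commutation, whereas the identification $X_{-1}\cong(X_1^d)'$ requires the correct interpretation of the pivot pairing together with the surjectivity argument above.
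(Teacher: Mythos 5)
The paper does not prove this proposition at all: it is imported verbatim as background from \cite[Chapter 2.10]{Tucsnak_Weiss}, so there is no in-paper argument to compare against. Your proof is correct and is, in substance, the standard textbook argument that the citation points to. All three parts check out: the norm equivalence follows from your resolvent computation $(\gamma I-A)^{-1}z=w+(\beta-\gamma)(\gamma I-A)^{-1}w$ with $w=(\beta I-A)^{-1}z$, which is exactly the resolvent identity in disguise; the duality part correctly uses $[(\bar\beta I-A^*)^{-1}]^*=(\beta I-A)^{-1}$ from Remark (ii) to get the isometry $\lvert\langle z,\phi\rangle_X\rvert\le\|z\|_{-1}\|\phi\|_1^d$ with equality attained, and your surjectivity argument (isometric hence closed range, trivial annihilator in the reflexive space $X_1^d$ hence dense range) is sound; the semigroup extension via commutation of $Q(t)$ with the resolvent, the bound $\|Q(t)z\|_{-1}\le Me^{\omega t}\|z\|_{-1}$, and the density argument for uniqueness, the semigroup law and strong continuity is the standard route. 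Two cosmetic remarks only: the commutation $Q(t)(\beta I-A)^{-1}=(\beta I-A)^{-1}Q(t)$ deserves one line of justification (it follows from Proposition~\ref{prop semigroup generator properties}, since $Q(t)$ maps $D(A)$ into $D(A)$ and commutes with $A$ there), and in the complex case the pairing $\phi\mapsto\langle z,\phi\rangle_X$ is conjugate-linear, so the identification $X_{-1}\cong(X_1^d)'$ is strictly speaking an anti-linear isometry unless one works with the anti-dual; since the paper's controllability results are set in real Hilbert spaces, this does not affect anything downstream.
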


\begin{prop}\label{prop consequences of rigged Hilbert space}
Let $A:D(A)\rightarrow X$ be a densely defined operator with $\rho(A)\neq\emptyset$,  $\beta\in\rho(A)$,  $X_1$ be as in Proposition~\ref{prop X_1 space} and let $X_{-1}$ be as in Proposition~\ref{prop completion of X}. Then $A\in\mathcal{L}(X_1,X)$ and it has a unique extension $A_{-1}\in\mathcal{L}(X,X_{-1})$. Moreover,
\[
(\beta I-A)^{-1}\in\mathcal{L}(X,X_1),\qquad (\beta I-A_{-1})^{-1}\in\mathcal{L}(X_{-1},X)
\]
(in particular, $\beta\in\rho(A_{-1})$), and these two operators are unitary.
\end{prop}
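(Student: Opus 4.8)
The plan is to build everything out of the single operator $R_\beta := (\beta I - A)^{-1}$, which lies in $\mathcal{L}(X)$ because $\beta\in\rho(A)$, and to exploit the fact that the three norms in play are defined precisely so that $R_\beta$ and $\beta I-A$ act isometrically between consecutive spaces in the scale $X_1\subset X\subset X_{-1}$.

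First I would treat the top of the scale. By the definition~\eqref{eqn defn X_1 norm} of the $X_1$-norm, for every $x\in X$ the element $z:=R_\beta x\in D(A)=X_1$ satisfies $\|z\|_{1}=\|(\beta I-A)z\|_X=\|x\|_X$, so $R_\beta:X\to X_1$ is a linear isometry; it is onto because $\beta I-A$ maps $X_1$ onto $X$. A surjective isometry between real Hilbert spaces preserves the inner product by polarisation, hence $(\beta I-A)^{-1}\in\mathcal{L}(X,X_1)$ is unitary. Boundedness of $A:X_1\to X$ is then immediate from the triangle inequality: for $z\in X_1$ one has $\|Az\|_X\leq\|(\beta I-A)z\|_X+|\beta|\,\|z\|_X=\|z\|_{1}+|\beta|\,\|z\|_X$, and $\|z\|_X=\|R_\beta(\beta I-A)z\|_X\leq\|R_\beta\|\,\|z\|_{1}$ controls the second term; thus $A\in\mathcal{L}(X_1,X)$.

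Next, the bottom of the scale. The point is that the same $R_\beta$, read as a map $(X,\|\cdot\|_{-1})\to(X,\|\cdot\|_X)$, is again an isometry, since $\|x\|_{-1}=\|R_\beta x\|_X$ by~\eqref{eqn defn X-1 norm}. Its domain $(X,\|\cdot\|_{-1})$ is dense in the completion $X_{-1}$ of Proposition~\ref{prop completion of X}, and $X$ is complete, so $R_\beta$ extends uniquely to an isometry $\widetilde{R}:X_{-1}\to X$. The range of $\widetilde{R}$ is closed, being the isometric image of a complete space, and it contains $R_\beta(X)=D(A)$, which is dense in $X$; hence $\widetilde{R}$ is onto and therefore unitary. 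I would then \emph{define} $A_{-1}:=\beta\,\iota-\widetilde{R}^{-1}$, where $\iota:X\hookrightarrow X_{-1}$ is the bounded canonical embedding, so that by construction $(\beta I-A_{-1})^{-1}=\widetilde{R}\in\mathcal{L}(X_{-1},X)$ is unitary and $\beta\in\rho(A_{-1})$. That $A_{-1}$ extends $A$ follows by evaluating on $x\in D(A)$, where $\widetilde{R}^{-1}$ agrees with $R_\beta^{-1}=\beta I-A$, whence $A_{-1}x=\beta x-(\beta x-Ax)=Ax$. Boundedness $A_{-1}\in\mathcal{L}(X,X_{-1})$ comes from boundedness of $\iota$ and of $\widetilde{R}^{-1}$, and uniqueness of the extension is forced by density of $D(A)$ in $X$ together with the estimate $\|Ax\|_{-1}=\|R_\beta Ax\|_X=\|-x+\beta R_\beta x\|_X\leq(1+|\beta|\,\|R_\beta\|)\,\|x\|_X$ valid on $D(A)$.

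The only genuinely delicate point, and the step I expect to demand the most care, is the passage to the completion $X_{-1}$: one must keep rigorous track of where each element lives under the identifications $X_1\subset X\subset X_{-1}$, verify that the continuous extension $\widetilde{R}$ of $R_\beta$ is well defined and surjective, and check that the assertion ``$\widetilde{R}^{-1}$ restricted to $D(A)$ equals $\beta I-A$'' is meaningful once $Ax$ is reinterpreted as an element of $X_{-1}$ via $\iota$. Everything else, namely the two isometry computations and the polarisation argument upgrading surjective isometries to unitaries, is routine given the norm definitions in Propositions~\ref{prop X_1 space} and~\ref{prop completion of X}.
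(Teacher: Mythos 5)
Your proof is correct, and it is essentially the argument behind this proposition, which the paper itself does not prove but imports verbatim from \cite[Chapter 2.10]{Tucsnak_Weiss}: there, as in your write-up, everything rests on the observation that the norms \eqref{eqn defn X_1 norm} and \eqref{eqn defn X-1 norm} are rigged exactly so that $(\beta I-A)^{-1}$ acts isometrically (hence, being surjective, unitarily) one step down the scale $X_1\subset X\subset X_{-1}$, with $A_{-1}$ obtained by continuous extension from the dense subspace $D(A)$. Your handling of the delicate point — extending $R_\beta$ to the completion $X_{-1}$, proving surjectivity via closedness of the isometric image plus density of $D(A)$, and verifying that $\widetilde{R}^{-1}$ agrees with $\iota\circ(\beta I-A)$ on $D(A)$ so that $A_{-1}:=\beta\iota-\widetilde{R}^{-1}$ really extends $A$ — is sound and complete.
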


\begin{rem}
In the remaining part we denote the extension $A_{-1}$ and the generator $A$ by the same symbol $A$. The same applies to the semigroup $(Q(t))_{t\geq0}$.
\end{rem}

Consider now a (linear) dynamical system described by the following initial value problem
\begin{equation}\label{eqn state equations}
    \begin{split}
            \frac{d}{dt}z(t)&=Az(t)+Bu(t)\\ 
            z(0)&\in X\qquad t\in J.  
    \end{split}
\end{equation}
where $X$ (called \textit{state space}) and $U$ (called \textit{control space}) are Hilbert spaces; $Z:=L_{loc}^1([0,\infty),X)\cap C([0,\infty),X_{-1})$ (called \textit{state trajectory space}) with $z\in Z$ and $u\in V:=L_{loc}^2([0,\infty),U)\cap C^1([0,\infty],U)$ (called \textit{control trajectory space}); $B\in \mathcal{L}(U,X_{-1})$; $Q(t)\in \mathcal{L}(X_{-1})$ for every $t\in J$ is an extension of a semigroup generated by $(A,D(A))$, $z_{0}:=z(0)\in X$.

The following Definition \cite[Definition 4.1.5]{Tucsnak_Weiss} is suitable in the context above, namely
\begin{defn}[Mild solution] \label{defn mild solution}
The $X_{-1}$-valued function $z$ defined by
\begin{equation}\label{eqn mild solution}
z(t):=Q(t)z_{0}+\int_{0}^{t}Q(t-s)Bu(s)ds,\quad t\in J
\end{equation}
is called the \textit{mild solution} of the corresponding differential equation~\eqref{eqn state equations}.
\end{defn}

The two basic types of controllability are given by the following
\begin{defn}[approximate controllability]\label{defn approx controllability}
The control process described by \eqref{eqn mild solution} is said to be \textit{approximately controllable} 
when for any given $z_{0},x_{T}\in X$ and any $\varepsilon>0$ there exists a control $u$ such that $\|z(T)-x_{T}\|\leq\varepsilon$, where $z_{0}$ is the initial condition and $u$ is the control. 
\end{defn}

\begin{defn}[exact controllability]\label{defn exact controllability}
The control process described by \eqref{eqn mild solution} is said to be \textit{exactly controllable} 
when $\varepsilon=0$ in definition~\ref{defn approx controllability}.
\end{defn}

In classical literature (see e.g. \cite{Tri75}), when no rigged Hilbert space construction was used, the following problem was of great importance. Namely, when taking equations \eqref{eqn state equations} as a primary model, its solution must lay in $D(A)$, which is only a dense subset of $X$. That means that the system \eqref{eqn state equations} cannot be exactly controllable. For the same reason, if considering infinite $T$ every approximately controllable system is exactly controllable. 

By the use of the rigged Hilbert space construction (called also ``a duality with respect to a pivot space``) the controllability problem is greatly simplified. Firstly, accodring to \cite[Proposition 4.1.4]{Tucsnak_Weiss} every solution to \eqref{eqn state equations} in $X_{-1}$ is a mild solution of \eqref{eqn state equations}. Although the converse, in general, still does not have to be true, due to the fact that now $A\in\mathcal{L}(X,X_{-1})$ greatly simplifies many considerations. 

This, however, comes at a price of the operator $B$ mostly being unbounded from $U$ to $X$. As we would like all the mild solutions \eqref{eqn mild solution} to be continuous $X$-valued functions, additional constraints must be put on the operator $B$. This is expressed by the following \cite[Definition 4.2.1]{Tucsnak_Weiss}
\begin{defn}\label{defn admissibility of B operator}
Let $B\in\mathcal{L}(U,X_{-1})$ and $\tau\geq0$. Define the operator $\Phi(\tau)$ as $\Phi(\tau)\in\mathcal{L}(L^2([0,\infty),U),X_{-1})$,
\begin{equation}\label{eqn defn forcing function}
 \Phi(\tau)u:=\int_{0}^{\tau}Q(\tau-\sigma)Bu(\sigma)d\sigma.
\end{equation}

The operator $B\in\mathcal{L}(U,X_{-1})$ is called an \textit{admissible control operator} for $(Q(t))_{t\geq0}$ if for some $\tau>0$ there is $\mathrm{Im}\Phi(\tau)\subset X$.
\end{defn}

\begin{rem}
Note that if $B$ is admissible, then in \eqref{eqn defn forcing function} we integrate in $X_{-1}$ but the integral is in $X$. Also, if the operator $\Phi(\tau)$ is such that $\mathrm{Im}\Phi(\tau)\subset X$ for some $\tau>0$ then for every $t\geq0$ there is $\Phi(t)\in\mathcal{L}(L^2([0,\infty),U),X)$ \cite[Proposition 4.2.2]{Tucsnak_Weiss}. Obviously every $B\in\mathcal{L}(U,X)$ is an admissible operator.
\end{rem}

The following Proposition \cite[Proposition 4.2.5]{Tucsnak_Weiss} shows that if $B$ is admissible and $u\in L_{loc}^2([0,\infty),U)$ then the initial value problem \eqref{eqn state equations} has a well-behaved unique solution in $X_{-1}$.
\begin{prop}\label{prop solution under admissibility of B}
 Assume that $B\in\mathcal{L}(U,X_{-1})$ is an admissible control operator for $(Q(t))_{t\geq0}$. Then for every $z_0\in X$ and every $u\in L_{loc}^2([0,\infty),U)$ the intial value problem \eqref{eqn state equations} has a unique solution in $X_{-1}$ given by \eqref{eqn mild solution} and it satisfies 
\[
 z\in C([0,\infty),X)\cap H_{loc}^1((0,\infty),X_{-1}).
\]
\end{prop}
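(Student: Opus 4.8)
The proof proposal is to take the function $z$ given by the mild solution formula~\eqref{eqn mild solution}, written as $z(t)=Q(t)z_0+\Phi(t)u$ with $\Phi(t)$ as in~\eqref{eqn defn forcing function}, as the candidate solution, and to establish in turn that (i) $z$ is $X$-valued with $z\in C([0,\infty),X)$; (ii) $z$ satisfies~\eqref{eqn state equations} in the integrated sense in $X_{-1}$, whence $z\in H^1_{loc}((0,\infty),X_{-1})$; and (iii) $z$ is the only solution. The term $Q(\cdot)z_0$ is harmless: since $z_0\in X$ and the restriction of $Q(t)$ to $X$ is the original strongly continuous semigroup, Proposition~\ref{prop semigroup generator properties} b) gives $Q(\cdot)z_0\in C([0,\infty),X)$. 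Thus the whole difficulty lies in the convolution term $\Phi(\cdot)u$, and it is precisely here that admissibility of $B$ (Definition~\ref{defn admissibility of B operator}) enters decisively.

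\textbf{The main obstacle.} By admissibility there is $\tau>0$ with $\mathrm{Im}\,\Phi(\tau)\subset X$, so $\Phi(\tau)\in\mathcal{L}(L^2([0,\infty),U),X)$ by the closed graph theorem, and by the remark following Definition~\ref{defn admissibility of B operator} one has $\Phi(t)\in\mathcal{L}(L^2([0,\infty),U),X)$ for every $t\geq0$; in particular $\Phi(t)u\in X$. To promote this to continuity into $X$ I would first treat a dense class of smooth controls, say $u\in C^1$: using that $\int_0^tQ(s)Bv\,ds\in X$ for $v\in U$ (which is Proposition~\ref{prop semigroup generator properties} d) read in $X_{-1}$, since $D(A_{-1})=X$), an integration by parts rewrites $\Phi(t)u$ as a sum of manifestly $X$-valued terms depending continuously on $t$. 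I would then pass to arbitrary $u\in L^2_{loc}$ by approximation, the crucial ingredient being the local uniform bound $\sup_{t\in[0,T]}\norm{\Phi(t)}_{\mathcal{L}(L^2,X)}\leq c_T<\infty$; this follows from the cocycle identity $\Phi(t_1+t_2)u=Q(t_2)\Phi(t_1)u+\Phi(t_2)\bigl(u(t_1+\cdot)\bigr)$ together with the growth estimate $\norm{Q(t)}\leq Me^{\omega t}$ of Proposition~\ref{prop semigroup generator properties} a). The uniform bound makes the smooth approximants converge to $\Phi(\cdot)u$ uniformly on compact time intervals, so the limit is continuous into $X$ and $z\in C([0,\infty),X)$. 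I expect this upgrade of an $X_{-1}$-valued convolution to a continuous $X$-valued function, on the sole strength of admissibility, to be the genuinely hard step.

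\textbf{The equation, regularity and uniqueness.} Once $z\in C([0,\infty),X)$ is known, Proposition~\ref{prop consequences of rigged Hilbert space} gives $A\in\mathcal{L}(X,X_{-1})$, so $s\mapsto Az(s)$ lies in $C([0,\infty),X_{-1})$ while $s\mapsto Bu(s)$ lies in $L^2_{loc}([0,\infty),X_{-1})$. I would verify the identity
\[
z(t)=z_0+\int_0^t\bigl(Az(s)+Bu(s)\bigr)\,ds\qquad\text{in }X_{-1}
\]
by pulling the bounded operator $A$ inside the integral and invoking Proposition~\ref{prop semigroup generator properties} d) in the form $A\int_0^tQ(s)z_0\,ds=Q(t)z_0-z_0$, together with Fubini's theorem on the convolution term to obtain $A\int_0^t\Phi(s)u\,ds=\Phi(t)u-\int_0^tBu(s)\,ds$; the two contributions add up exactly to $z(t)-z_0$. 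Since the integrand $Az+Bu$ lies in $L^2_{loc}$ into $X_{-1}$ and $z\in C([0,\infty),X)\subset C([0,\infty),X_{-1})$, this identity shows $z$ is absolutely continuous into $X_{-1}$ with $\dot z=Az+Bu$ a.e., that is $z\in H^1_{loc}((0,\infty),X_{-1})$, and that $z$ solves~\eqref{eqn state equations} with $z(0)=z_0$. Uniqueness is then immediate: by \cite[Proposition 4.1.4]{Tucsnak_Weiss}, recalled above, every solution of~\eqref{eqn state equations} in $X_{-1}$ is a mild solution and therefore coincides with the single function given by~\eqref{eqn mild solution}.
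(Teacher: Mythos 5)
The paper does not actually prove this proposition: it is quoted as background material from \cite[Proposition 4.2.5]{Tucsnak_Weiss}, so the only meaningful comparison is with the proof in that reference. Your reconstruction is correct and follows essentially that proof: the locally uniform bound on $\Phi(t)$ deduced from admissibility via the composition identity (which handles $t<\tau$ by applying it to controls vanishing on an initial segment), continuity into $X$ first on a dense class of controls (Tucsnak and Weiss use step inputs where you use $C^1$ controls and integration by parts, an inessential difference) and then for all $u\in L^2_{loc}$ by uniform convergence, the integrated form of \eqref{eqn state equations} in $X_{-1}$ obtained from $A\in\mathcal{L}(X,X_{-1})$, the generator identity of Proposition~\ref{prop semigroup generator properties} and Fubini, and uniqueness from \cite[Proposition 4.1.4]{Tucsnak_Weiss}.
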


\section{Controllability by Schmidt Theorem}
In this section we present our main findings. 

\subsection{Problem statement}
Consider the nonlinear dynamical system with zero initial condition stated by the differential equation in $X_{-1}$ as
$$\frac{d}{dt}z(t)=Az(t)+Bu(t)+f(z(t)),$$
where $u\in V=L_{loc}^2([0,\infty),U)\cap C^{\infty}([0,\infty),U)$, $A\in\mathcal{L}(X,X_{-1})$ and $B\in\mathcal{L}(U,X_{-1})$ is an admissible control operator for $(Q(t))_{t\geq0}$, $f:X\rightarrow X$ is a given continuous function. The mild solution of the above initial value problem is

\begin{equation}\label{eqn mild solution nonlinear}
    z(t)=\int_{0}^{t}Q(t-s)f\big(z(s)\big)ds+\int_{0}^{t}Q(t-s)Bu(s)ds,\quad t\in J.
\end{equation}
The main problem we tackle in this article is to find the conditions under which the dynamical system expressed by~\eqref{eqn mild solution nonlinear} is exactly controllable.

\subsection{Step 1}
To show the existence of a solution of problem \eqref{eqn mild solution nonlinear} we build an appropriate integral operator  $\Psi:Z\rightarrow Z$ and show that it has a fixed point. Let then
\begin{equation}\label{eqn integral Psi operator}
\Psi(z)(t):=\int_{0}^{t}Q(t-s)f\big(z(s)\big)ds+\int_{0}^{t}Q(t-s)Bu(s)ds,\quad t\in J.
\end{equation}

In Theorem~\ref{thm Schmidt ivp_existence} for $z$ to be a unique solution of the Cauchy problem stated there, $z$ has to be also a solution of the integral equation~\eqref{eqn Schmidt integral form}. What follows, the integral operator associated with \eqref{eqn Schmidt_IVP} has the form
\begin{equation}\label{eqn Schmidt by Psi operator}
\Psi(z)(t)=\int_{0}^{t}g\big(s,z(s)\big)ds+\int_{0}^{t}k\big(s,z(s)\big)ds,\quad t\in J.
\end{equation}

To show the existence of a fixed point of the operator~\eqref{eqn integral Psi operator} it is enough to show that appropriate parts of \eqref{eqn Schmidt by Psi operator} fulfil assumptions of Theorem~\ref{thm Schmidt ivp_existence}. Unfortunately, the obvious choice of functions under integrals in \eqref{eqn Schmidt by Psi operator}, namely $g(s,z(s)):=Q(t-s)f(z(s))$ and $k(s,z(s)):=Q(t-s)Bu(s)$ is not possible. The reason for that is that the semigroup $(Q(t))_{t\geq0}$ is defined for all $t\in[0,T]$, as well as functions $g,k:[0,T]\times X\rightarrow X$ in Theorem ~\ref{thm Schmidt ivp_existence}. Hence, for given nonlinearity $f$ we introduce formally two parameter-dependent families of functions 
    \begin{align*}
            &\mathcal{G}:=\{g_t:[0,t]\times X\rightarrow X:g_t(s,x):=Q(t-s)f(x),\ t\in [0,T]\}, \\
            &\mathcal{K}:=\{k_t:[0,t]\times X\rightarrow X:k_t(s,x):=Q(t-s)Bu_x(s),\ t\in [0,T]\},
    \end{align*}
where the steering trajectory $u_x$ is built based on an element $x$ of the state space, as explained below in \eqref{eqn steering trajectory}.Taking into account that members $g_t$ and $k_t$ of both families "work under the integral", the upper limit of which changes in the interval $[0,T]$, Theorem \ref{thm Schmidt ivp_existence} cannot be used directly. Instead, we will work it out from other facts. 

We make use of the following
\begin{defn}\label{defn Pickard operator et al}
Using the notation from Definition~\ref{defn mild solution} we define
    \begin{itemize}
    \item [a)] the Pickard-type \cite{Appell_2005} operator $L\in\mathcal{L}(Z)$, $$Lz:=\int_{0}^{\cdot}Q(\cdot-s)z(s)ds,$$
    \item [b)] the Pickard composition operator $L(t)\in\mathcal{L}(Z,X_{-1})$, $$L(t)z:=(Lz)(t),\quad t\in J,$$
    \item [c)] the nonlinear continuous composition $fz\in Z$, $(fz)(t):=f(z(t))$
    \end{itemize}
\end{defn}

With the above definition, the mild solution~\eqref{eqn mild solution nonlinear} may be rewritten in the form
\begin{equation}\label{eqn state in operator form}
z(t)=L(t)fz+L(t)Bu.
\end{equation}

Let $x_{T}\in X$ be the desired final state. Following a canonical procedure \cite{Tri75,CarQui84}, we assume exact controllability of the linear system without the nonlinear part $f$, given by Definition~\ref{defn mild solution}. Then, without loss of generality, we assume that the attainable set $\mathscr{A}_T$ is equal to the image of the $L(T)B$ operator, that is
\begin{align*}
\mathscr{A}_T:=&\{x_T\in X_{-1}: x_T=z(T),u\in V\}\\
=&\mathrm{Im}\big(L(T)B\big)=\mathrm{Im}\Phi(T)=X.
\end{align*}
The reason of such approach is to have a possibility to drive the system with nonlinear disturbance $f$ to every point it could attain without such disturbance. 

Define a linear and invertible operator $W:V/\ker\big(L(T)B\big)\rightarrow X$,
\begin{equation}\label{defn W operator}
W(u):=L(T)Bu.
\end{equation}
which has a bounded inverse operator $W^{-1}:X\rightarrow V/\ker\big(L(T)B\big)$, with $\|W^{-1}\|\leq M_{W^{-1}}<\infty$.


As we are interested in exact controllability, let us fix  $x_T\in X$. We construct a control signal based on this $x_T$ by selecting one element
\begin{equation}\label{eqn steering trajectory}
u_{x}\in W^{-1}\big(x_{T}-L(T)fz\big),
\end{equation}
which is explicitly related to a trajectory $z$ (with values in $X$ due to admissibility of $B$) which system \eqref{eqn mild solution nonlinear} will follow. 
Substituting control function defined by \eqref{eqn steering trajectory} into operator equation \eqref{eqn integral Psi operator} for $t=T$ we obtain
$$\Psi(z)(T)=L(T)fz+\big(L(T)B\big)\big(L(T)B\big)^{-1}\big(x_{T}-L(T)fz\big)=x_T.$$
By putting the same control function $u_{x}$ into mild solution \eqref{eqn mild solution nonlinear} we get $z(T)=x_T$, what gives $\Psi(z)(T)=z(T)$ and shows that the trajectory end point matches. The only thing left is to show that with the control function $u_x$ defined by \eqref{eqn steering trajectory} the operator $\Psi$ defined by \eqref{eqn integral Psi operator} has a fixed point in 
\[
Z= C([0,\infty),X)\cap\mathcal{H}_{loc}^1((0,\infty),X_{-1}),                                                                                                                                                                                                                                                                                                                                                                                                                                                                                                                                                                                                                                                                                                                                                                                                               \]
(note again that the operator $B$ is assumed to be admissible - Proposition \ref{prop solution under admissibility of B}) what means that there exists such trajectory $z$ of the system \eqref{eqn mild solution nonlinear} which leads it to the given final point $x_T$.

\subsection{Step 2}
The existence of a solution to integral equation \eqref{eqn Schmidt by Psi operator} is equivalent to the existence of a fixed point of the operator \eqref{eqn integral Psi operator}. We begin with the following

\begin{prop}\label{prop H1-H2}
Let $X$ be a real Hilbert space and we assume that
\begin{itemize}
    \item [(H1)]  $f:X\rightarrow X$ is continuous  on $X$ and there exists $M_f\in[0,\infty)$ such that $f$ fulfils a one-sided Lipschitz condition i.e.
\[
 \langle x_1 - x_2,f(x_1)-f(x_2)\rangle\leq M_f\norm{x_1-x_2}^2\qquad \forall x_1,x_2\in X,\ \forall t\in J,
\]
    \item [(H2)] there exists $M_g\in[0,\infty)$ such that for every $t\in J$ the map $g_t\in\mathcal{G}$ (i.e. $g_t:[0,t]\times X\rightarrow X$, $g_t(s,x):=Q(t-s)f(x)$) fulfills a one-sided Lipschitz condition
\[
 \langle x_1 - x_2,g_t(s,x_1)-g_t(s,x_2)\rangle\leq M_g\norm{x_1-x_2}^2\qquad \forall x_1,x_2\in X,\ \forall s\in [0,t].
\]
\end{itemize}
Then for a function $g:J\times X\rightarrow X$ given by $g(t,x):=\frac{d}{dt}\int_{0}^{t}g_t(s,x)ds$ we have
\[
[x_1-x_2,g(t,x_1)-g(t,x_2)]_{-}\leq M_g\norm{x_1-x_2}\qquad \forall x_1,x_2\in X,\ \forall t\in J.
\]
\end{prop}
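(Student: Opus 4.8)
The plan is to reduce everything to the single observation that $g(t,x)$ coincides with $g_t(0,x)=Q(t)f(x)$; once this is established, hypothesis (H2) delivers the estimate in one line.

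First I would compute $g(t,x)$ explicitly. Fixing $x\in X$ and substituting $\tau:=t-s$ in the Bochner integral gives
\[
\int_{0}^{t} g_t(s,x)\,ds=\int_{0}^{t} Q(t-s)f(x)\,ds=\int_{0}^{t} Q(\tau)f(x)\,d\tau .
\]
Since $f(x)\in X$ is a fixed element and $\tau\mapsto Q(\tau)f(x)$ is continuous into $X$ by Proposition~\ref{prop semigroup generator properties}b), the fundamental theorem of calculus for a continuous $X$-valued integrand (equivalently Proposition~\ref{prop semigroup generator properties}c)) yields that $t\mapsto\int_{0}^{t}Q(\tau)f(x)\,d\tau$ is differentiable in the norm of $X$ with
\[
g(t,x)=\frac{d}{dt}\int_{0}^{t}Q(\tau)f(x)\,d\tau=Q(t)f(x)=g_t(0,x).
\]
I would emphasize that the change of variables is exactly what makes this clean: it removes the $t$-dependence from the integrand, so that no regularity of $f(x)$ beyond membership in $X$ (in particular no assumption $f(x)\in D(A)$) is needed, and the derivative exists in $X$.

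With this identification the rest is routine. If $x_1=x_2$ then $g(t,x_1)-g(t,x_2)=0$ and, by Lemma~\ref{lem bracket properties}c), $[0,0]_{-}=0=M_g\|x_1-x_2\|$, so the inequality holds with equality. For $x_1\neq x_2$ I would invoke the inner-product representation of the bracket from Lemma~\ref{lem one-sided Lipschitz in Hilbert space}a), namely
\[
[x_1-x_2,\,g(t,x_1)-g(t,x_2)]_{-}=\frac{1}{\|x_1-x_2\|}\big\langle x_1-x_2,\,Q(t)\big(f(x_1)-f(x_2)\big)\big\rangle .
\]
Recognizing $Q(t)\big(f(x_1)-f(x_2)\big)=g_t(0,x_1)-g_t(0,x_2)$ and applying (H2) at $s=0$ bounds the inner product by $M_g\|x_1-x_2\|^2$, whence the bracket is at most $M_g\|x_1-x_2\|$, which is the claim.

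The only genuinely delicate point is the first step: justifying differentiation under the moving upper limit and confirming that $g(t,x)$ is the $X$-valued map $Q(t)f(x)$; everything afterward is a one-line use of (H2) with $s=0$ together with the Hilbert-space formula for $[\cdot,\cdot]_{-}$. It is also worth noting that (H1) plays no direct role in this particular proposition beyond ensuring that $f(x)\in X$ so that the integrals are well defined; the substantive hypothesis here is (H2).
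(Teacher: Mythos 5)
Your proof is correct, and its overall skeleton matches the paper's: both arguments reduce the proposition to the identity $g(t,x)=Q(t)f(x)=g_t(0,x)$ and then conclude by applying (H2) at $s=0$ together with the Hilbert-space representation of the bracket from Lemma~\ref{lem one-sided Lipschitz in Hilbert space} (your explicit treatment of the degenerate case $x_1=x_2$ via Lemma~\ref{lem bracket properties}c) is a small point of extra care that the paper leaves implicit). Where you genuinely diverge is in how that identity is obtained. The paper differentiates $G_x(t)=\int_0^t Q(t-s)f(x)\,ds$ by splitting the difference quotient into
$\tfrac{1}{h}\bigl(Q(h)-I\bigr)\int_0^t Q(t-s)f(x)\,ds+\tfrac{1}{h}\int_t^{t+h}Q(t+h-s)f(x)\,ds$,
identifying the first limit as $AG_x(t)$ via the semigroup facts $\int_0^t Q(\tau)y\,d\tau\in D(A)$ and $A\int_0^t Q(\tau)y\,d\tau=Q(t)y-y$ from Proposition~\ref{prop semigroup generator properties}, and the second limit as $f(x)$, so that $g(t,x)=AG_x(t)+f(x)=Q(t)f(x)$. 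You instead perform the change of variables $\tau=t-s$ \emph{before} differentiating, which removes the $t$-dependence from the integrand, and then the fundamental theorem of calculus for continuous Bochner integrands gives $g(t,x)=Q(t)f(x)$ in one step. Your route is more elementary: it never invokes the generator $A$, domain membership of the integral, or the two-limit decomposition, and it makes transparent that only strong continuity of the semigroup and $f(x)\in X$ are needed. What the paper's longer computation buys is the intermediate formula $g(t,x)=AG_x(t)+f(x)$, exhibiting $G_x$ as a solution of the associated abstract ODE, which echoes the mild-solution structure used elsewhere in the paper; but for the proposition itself that extra information is unnecessary, and your shortcut is fully valid.
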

\begin{proof}
\begin{itemize}
\item [1.] Fix $x\in X$ and define $G_{x}:J\rightarrow X$, $G_{x}(t):=\int_{0}^{t}g_t(s,x)ds=\int_{0}^{t}Q(t-s)f(x)ds$.
\item [2.] With the definition in 1. it follows that 
        \begin{equation*}
        \begin{split}
        &g(t,x)=\frac{d}{dt}G_{x}(t)=\lim_{h\rightarrow0}\frac{1}{h}\big(G_{x}(t+h)-G_{x}(t)\big)\\
        &=\lim_{h\rightarrow0}\frac{1}{h}\Big( \int_{0}^{t+h}Q(t+h-s)f(x)ds     -\int_{0}^{t}Q(t-s)f(x)ds \Big)\\
        &=\lim_{h\rightarrow0}\frac{1}{h}\Big( Q(h)\int_{0}^{t+h}Q(t-s)f(x)ds   -\int_{0}^{t}Q(t-s)f(x)ds \Big)\\
        &=\lim_{h\rightarrow0}\frac{1}{h}\Big((Q(h)-I)\int_{0}^{t}Q(t-s)f(x)ds  \\
        &\qquad\qquad\qquad +Q(h)\int_{t}^{t+h}Q(t-s)f(x)ds \Big)\\
        &=\lim_{h\rightarrow0}\Big(\frac{1}{h}(Q(h)-I)\int_{0}^{t}Q(t-s)f(x)ds  \\
        &\qquad\qquad\qquad +\frac{1}{h}\int_{t}^{t+h}Q(t+h-s)f(x)ds\Big)\\
        &=AG_{x}(t)+f(x)
        \end{split}
        \end{equation*}
        where the last equality is true provided that both limits on the right hand side exist. We show it below.
\item [3.] Fix $t\in J$. Then 
\begin{equation*}
 \begin{split}
    &\Big\|\frac{1}{h}\int_{t}^{t+h}Q(t+h-s)f(x)ds-f(x) \Big\|\\
    &=\Big\|\frac{1}{h}\int_{t}^{t+h}\big(Q(t+h-s)f(x)-f(x)\big)ds \Big\|\\
    &\leq\norm{Q(t+h-s)f(x)-f(x)}
 \end{split}
\end{equation*}
for some $s\in[t,t+h]$. As $h\rightarrow0$ there is also $s\rightarrow t$ and by strong continuity of the semigroup $Q(t)$ we have
\begin{equation*}\label{eqn result of prop H1-H2 point 2}
    \lim_{h\rightarrow0}\frac{1}{h}\int_{t}^{t+h}Q(t+h-s)f(x)ds=f(x),\quad \forall t\in J.
\end{equation*}
    
\item [4.] For any fixed $x\in X$, hence fixed $f(x)\in X$, by Proposition \ref{prop semigroup generator properties} there is 
\[
 \int_{0}^{t}Q(t-s)f(x)ds=\int_{0}^{t}Q(\tau)f(x)d\tau\in D(A),
\]
and we have 
\[
 A\int_{0}^{t}Q(t-s)f(x)ds=A\int_{0}^{t}Q(\tau)f(x)d\tau=Q(t)f(x)-f(x).
\]
In particular, although the integration is formally carried out in $X_{-1}$, the result is in $X$.
\item [5.] From points 3 and 4 it follows that 
\begin{equation}\label{eqn g computed}
g(t,x)=Q(t)f(x)
\end{equation}
is continuous and bounded.
\item [6.] Fix $t\in J$ and $x_1,x_2\in X$. We may write the following estimation
\begin{align*}
&\langle x_1-x_2,g(t,x_1)-g(t,x_2)\rangle=\langle x_1-x_2,Q(t)f(x_1)-Q(t)f(x_2)\rangle\\
&=\langle x_1-x_2,g_{t}(0,x_1)-g_{t}(0,x_2)\rangle\leq M_g\norm{x_1-x_2}^2.
\end{align*}
\item[7.] As $X$ is a Hilbert space over $\mathbb{R}$, the result of point 6 is equivalent, due to Lemma \ref{lem one-sided Lipschitz in Hilbert space}, to condtion a) of Theorem \ref{thm Schmidt ivp_existence}.
\end{itemize}
\end{proof}

Before proceeding further we state a useful lemma.
\begin{lem}\label{lem integral operator properties}
Let $X$ be a Banach space, $a\in X$, $\tau<T$ in $\mathbb{R}$, $J:=[\tau,T]$, $f:J\times X\rightarrow X$ continuous. For $u\in C(J,X)$ define
$\Phi: C(J,X)\rightarrow C(J,X)$,
\[
 (\Phi u)(t):=a+\int_{\tau}^{t}f(s,u(s))ds,\quad \forall t\in J.
\]
If the range  $\mathrm{Im}f\subseteq S\subseteq X $ then $(\Phi u)(t)\in a+(T-\tau)\cl\conv(S\cup\{0\})$.
\end{lem}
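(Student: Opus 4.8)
The plan is to reduce the statement to a claim about the Riemann integral $\int_{\tau}^{t} g(s)\,ds$ of the continuous integrand $g(s):=f(s,u(s))$, and to exploit that every Riemann sum is, up to the scalar factor $(T-\tau)$, a convex combination of points of $S$ together with the origin. First I would record that $g$ is continuous on $J$, being the composition of the continuous map $s\mapsto(s,u(s))$ with $f$, and that $g(s)\in\mathrm{Im}f\subseteq S$ for every $s\in J$. Since $(\Phi u)(t)=a+\int_{\tau}^{t} g(s)\,ds$, it suffices to prove the inclusion $\int_{\tau}^{t} g(s)\,ds\in(T-\tau)\cl\conv(S\cup\{0\})$ for each fixed $t\in J$.

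Next I would approximate the integral by Riemann sums over the equidistant partition $s_i:=\tau+i(t-\tau)/n$, $i=0,\dots,n$, writing
\[
R_n:=\frac{t-\tau}{n}\sum_{i=1}^{n} g(s_i).
\]
Because $g$ is continuous on the compact interval $[\tau,t]$, these sums converge to $\int_{\tau}^{t} g(s)\,ds$ as $n\to\infty$. The heart of the argument is to show $R_n\in(T-\tau)\conv(S\cup\{0\})$. Setting $\lambda:=(t-\tau)/(T-\tau)\in[0,1]$ and $c_n:=\frac{1}{n}\sum_{i=1}^{n} g(s_i)$, I would rewrite
\[
R_n=(T-\tau)\bigl(\lambda c_n+(1-\lambda)\,0\bigr).
\]
Here $c_n$ is a convex combination of the points $g(s_i)\in S$, hence $c_n\in\conv S$, and therefore $\lambda c_n+(1-\lambda)\,0$ is a convex combination of points of $S$ and the origin, i.e. an element of $\conv(S\cup\{0\})$.

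Finally, since scaling by the positive constant $T-\tau$ commutes with closure, the set $(T-\tau)\cl\conv(S\cup\{0\})=\cl\bigl((T-\tau)\conv(S\cup\{0\})\bigr)$ is closed and contains every $R_n$; passing to the limit gives $\int_{\tau}^{t} g(s)\,ds\in(T-\tau)\cl\conv(S\cup\{0\})$, whence $(\Phi u)(t)\in a+(T-\tau)\cl\conv(S\cup\{0\})$. I expect the only genuine subtlety to be the rescaling from the variable factor $(t-\tau)$ to the fixed factor $(T-\tau)$: it is precisely this step that forces the origin to be adjoined to $S$, since for $t<T$ one must pad the convex combination with $0$ to absorb the deficit $T-t$. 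The appeal to the closure is likewise essential, as the integral is only a limit of the finite convex combinations $R_n$ rather than one of them. An alternative route avoiding Riemann sums would apply the Hahn--Banach separation theorem to the closed convex set $(T-\tau)\cl\conv(S\cup\{0\})$ together with the identity $\varphi\bigl(\int_{\tau}^{t} g\bigr)=\int_{\tau}^{t}\varphi(g(s))\,ds$ for continuous functionals $\varphi$, but the Riemann-sum argument is more elementary and self-contained.
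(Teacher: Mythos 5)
Your proof is correct and follows essentially the same route as the paper's: both approximate the integral by Riemann sums, recognize each sum (after rescaling by $T-\tau$) as a convex combination of points of $S$ padded with the origin to absorb the deficit $T-t$, and conclude by passing to the closed set $(T-\tau)\cl\conv(S\cup\{0\})$ in the limit. Your write-up is in fact somewhat tighter than the paper's (explicit partition, explicit limit argument, and the observation that scaling by $T-\tau>0$ commutes with closure), but there is no difference of substance.
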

\begin{proof}
 \begin{itemize}
  \item [1.] Fix $u\in C(J,X)$. We have
    \[
    (\Phi u)(t)\approx a+\sum_{k=1}^{n}(t_k-t_{k-1})f(\tau_k,u(\tau_k)),
    \]
    where $\tau=t_0<t_1<t_2<\dots<t_n=t$.
  
  \item[2.] Rewriting above we get
    \[
    (\Phi u)(t)\approx a+(T-\tau)\Big[\sum_{k=1}^{n}\frac{t_k-t_{k-1}}{T-\tau}f(\tau_k,u(\tau_k))+\frac{T-t}{T-\tau}\Theta\Big],
    \]
    where $\Theta\in S\cup\{0\}$.
  \item[3.] As $\sum_{k=1}^{n}\frac{t_k-t_{k-1}}{T-\tau}+\frac{T-t}{T-\tau}=1$, there is 
    \[
  \Big[\sum_{k=1}^{n}\frac{t_k-t_{k-1}}{T-\tau}f(\tau_k,u(\tau_k))+\frac{T-t}{T-\tau}\Theta\Big]\in\conv(S\cup\{0\})\subseteq\cl\conv(S\cup\{0\}).
   \]
  \item[4.] As integral is a limit to the Riemann sums, each of which belongs to $\conv(S\cup\{0\})$, the integral itself belong to $\cl\conv(S\cup\{0\})$, i.e.
   \[
  (\Phi u)(t)\in a+(T-\tau)\cl\conv(S\cup\{0\}).
   \]
 \end{itemize}

\end{proof}

Let us now focus on assumption b) of Theorem~\ref{thm Schmidt ivp_existence}. We can relate it to our controllability setting by the following
\begin{prop}\label{prop H3-H5}
Using previously defined notation, if
 \begin{itemize}
    \item [(H3)] the operator $B\in\mathcal{L}(U,X)$ (hence, as bounded from $U$ to $X$, it is an admissible control operator for $(Q(t))_{t\geq0}$) and the linear system \eqref{eqn mild solution} is exactly controllable to the space $X=\big(L(T)B\big)$,
    \item [(H4)] the operator $W:V/\ker(L(T)B)\rightarrow X$, $W(u):=L(T)Bu$ has a bounded inverse operator $W^{-1}$,
    \item [(H5)] the space $X$ is ordered by a normal wedge $C$ and the operator $W^{-1}$ is such that for every $y\in X$ the function $f:[0,t]\rightarrow X$,
	  \[
	  f(s):=Q(t-s)BW^{-1}(y)(s)
	  \]
	  is convex ,
    \item [(H6)] there exists $M_w\in[0,\infty)$ such that
	  \[
	    \alpha(W^{-1}(D)(t))\leq M_w\alpha(D)\qquad \forall t\in J,\ \forall D\subset X,D\text{ bounded}, 
	  \]
    \item [(H7)] there exists $M_w'\in[0,\infty)$ such that
	  \[
	    \alpha(W^{-1}(D')(t))\leq M_w'\alpha(D)\qquad \forall t\in J,\ \forall D\subset X,D\text{ bounded}, 
	  \]
    where $D':=\{y\in X: W^{-1}(y)=\frac{d}{ds}W^{-1}(x),\ x\in D\}$,
 \end{itemize}
 then for a function $k:J\times X\rightarrow X$ given by  $k(t,x):=\frac{d}{dt}\int_{0}^{t}Q(t-s)BW^{-1}(x)(s)ds$ we have
 \[
 \alpha(k(J,D))\leq M_{k}\alpha(D)\qquad\forall D\subseteq X,\ D\ \text{bounded}.
 \]
\end{prop}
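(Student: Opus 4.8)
The plan is to follow the pattern of Proposition~\ref{prop H1-H2}: first put $k(t,x)$ into a closed form, then estimate the Kuratowski MNC of the resulting set by combining the integral Lemma~\ref{lem integral operator properties}, the Ambrosetti Theorem~\ref{thm Ambrosetti's MNC}, the equicontinuity Theorem~\ref{thm equicontinuity theorem by Kosmol}, and the hypotheses (H5)--(H7). Writing $v(s):=BW^{-1}(x)(s)$ and differentiating the convolution $\int_0^t Q(t-s)v(s)\,ds$ as in Proposition~\ref{prop H1-H2} (change of variable $\sigma=t-s$, then the fundamental theorem together with Proposition~\ref{prop semigroup generator properties}), I expect to obtain
\[
k(t,x)=Q(t)BW^{-1}(x)(0)+\int_0^t Q(t-s)B\tfrac{d}{ds}W^{-1}(x)(s)\,ds .
\]
The advantage of this form over the equivalent $BW^{-1}(x)(t)+A\int_0^t Q(t-s)v(s)\,ds$ is that every term stays in $X$ (the generator $A$ does not appear), so $k:J\times X\to X$ is well defined, continuous and bounded. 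The derivative $\tfrac{d}{ds}W^{-1}(x)$ is precisely $W^{-1}(y)$ for the $y\in D'$ associated with $x$, which explains why both $D$ and $D'$ occur in the hypotheses.

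For fixed $t$ I would split, using subadditivity (Theorem~\ref{thm mnc properties B-space}~g)),
\[
\alpha\big(k(t,D)\big)\le\alpha\big(\{Q(t)BW^{-1}(x)(0):x\in D\}\big)+\alpha\Big(\Big\{\textstyle\int_0^t Q(t-s)BW^{-1}(y)(s)\,ds:x\in D\Big\}\Big).
\]
The boundary term is controlled by the elementary fact that a bounded linear operator $\Phi$ satisfies $\alpha(\Phi(A))\le\|\Phi\|\,\alpha(A)$ (immediate from the definition of $\alpha$, cf.\ Theorem~\ref{thm mnc properties B-space}~h)), the semigroup bound of Proposition~\ref{prop semigroup generator properties}~a), and (H6), giving $\le\tilde M M_w\,\alpha(D)$ with $\tilde M:=\|B\|Me^{\omega T}$. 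For the integral term I would invoke Lemma~\ref{lem integral operator properties} with $a=0$, $\tau=0$ and integrand range $S:=\{Q(t-s)BW^{-1}(y)(s):s\in[0,t],\ x\in D\}$ to deduce
\[
\Big\{\textstyle\int_0^t Q(t-s)BW^{-1}(y)(s)\,ds:x\in D\Big\}\subseteq t\,\cl\conv(S\cup\{0\}),
\]
so that by properties c), d), h), i) of Theorems~\ref{thm mnc properties}--\ref{thm mnc properties B-space} the MNC of the integral set is $\le t\,\alpha(S)\le T\,\alpha(S)$.

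The crux is the estimate of $\alpha(S)$, the MNC of the range of the family $\mathcal H:=\{s\mapsto Q(t-s)BW^{-1}(y)(s):x\in D\}\subset C([0,t],X)$. This is where (H5) enters: each member is convex and the family is pointwise bounded (since $D$ is bounded and all operators involved are bounded), so the Banach--Steinhaus-type Theorem~\ref{thm equicontinuity theorem by Kosmol} makes it equicontinuous, and the Ambrosetti Theorem~\ref{thm Ambrosetti's MNC} yields
\[
\alpha(S)=\alpha\big(\mathcal H([0,t])\big)=\sup_{s\in[0,t]}\alpha\big(\{Q(t-s)BW^{-1}(y)(s):x\in D\}\big)\le\tilde M M_w'\,\alpha(D),
\]
the last inequality again using the bounded-operator estimate, Proposition~\ref{prop semigroup generator properties}~a) and (H7) (note $\{W^{-1}(y)(s):x\in D\}=W^{-1}(D')(s)$). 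Combining the two terms gives, uniformly in $t\in J$, the pointwise bound $\alpha(k(t,D))\le\tilde M(M_w+TM_w')\,\alpha(D)$. A final application of Theorem~\ref{thm Ambrosetti's MNC} to the bounded, equicontinuous family $\{k(\cdot,x):x\in D\}$ converts this into $\alpha(k(J,D))=\sup_{t\in J}\alpha(k(t,D))\le M_k\,\alpha(D)$ with $M_k:=\tilde M(M_w+TM_w')$.

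The delicate point throughout is the justification of equicontinuity, since no compactness is assumed: equicontinuity cannot be drawn from relative compactness of any input set and must be extracted solely from the convexity hypothesis (H5) via Theorem~\ref{thm equicontinuity theorem by Kosmol}. Establishing equicontinuity of the outer family $\{k(\cdot,x):x\in D\}$, required for the last reduction, likewise rests on the convex structure of the integrand rather than on compactness, and is where I expect the argument to need the most care; once Ambrosetti's Theorem is licensed at each step, the remaining work is routine bookkeeping with the MNC properties.
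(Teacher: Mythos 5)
Your proposal is correct and follows essentially the same route as the paper's own proof: the same closed form $k(t,x)=Q(t)BW^{-1}(x)(0)+\int_0^t Q(t-s)B\tfrac{d}{ds}W^{-1}(x)(s)\,ds$, the same use of (H5) with Theorem~\ref{thm equicontinuity theorem by Kosmol} and the Ambrosetti Theorem~\ref{thm Ambrosetti's MNC} to get equicontinuity and the MNC bounds for the families indexed by $D$ and $D'$ (via (H6) and (H7)), Lemma~\ref{lem integral operator properties} to place the integral term in $T\cl\conv(\cdot\cup\{0\})$, and a final Ambrosetti application to the family $\{k(\cdot,x)\}_{x\in D}$. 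The only divergence is the concluding combination step: you correctly regard $k(t,D)$ as contained in a Minkowski sum and invoke $\alpha(A+B)\leq\alpha(A)+\alpha(B)$ (Theorem~\ref{thm mnc properties B-space}\,g)), obtaining $M_k=M_q\norm{B}(M_w+TM_w')$, whereas the paper asserts the inclusion $\mathcal{P}(t)\subset\mathcal{F}_t(J_t)\cup T\cl\conv(\mathcal{F}_t'(J_t)\cup\{0\})$ and takes a maximum --- an inclusion that does not hold for a sum of two nonzero terms --- so on this one detail your bookkeeping is the sounder of the two, affecting only the value of the constant $M_k$ and not the conclusion.
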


\begin{proof}
\begin{itemize}
\item [1.] From Definition \ref{defn Kuratowski mnc} for every $s\in[0,T]$ and every bounded $D\subset X$ we have
  \begin{align*}
    \alpha\big(W^{-1}(D)(s)\big)=&\inf\big\{\delta(s)\geq0:W^{-1}(D)(s)\subset\bigcup_{i=1}^{n}\Sigma_{i}(s);\\
    &\diam\Sigma_{i}(s)\leq\delta(s),\ \Sigma_{i}(s)\subset U,\ i\in\{1,\dots,n\};n\in\mathbb{N}\big\}
  \end{align*}
where $\Sigma_{i}\subset V/\ker(L(T)B)=L_{loc}^{2}([0,\infty),U)\cap C^\infty([0,\infty),U)/\ker(L(T)B)$.
\item[2.] Further, for every $\tau\in[0,T]$ we have
  \begin{align*}
    \alpha\big(Q(\tau)BW^{-1}(D)(s)\big)=&\inf\big\{\delta'(s)\geq0:Q(\tau)BW^{-1}(D)(s)\subset\bigcup_{i=1}^{n}Q(\tau)B\Sigma_{i}(s);\\
    &\diam Q(\tau)B\Sigma_{i}(s)\leq\delta'(s)\big\}
  \end{align*}
  and
  \[
    \diam(\Sigma_{i}(s))=\sup_{u(s),v(s)\in\Sigma_{i}(s)}\norm{u(s)-v(s)}.
  \]
  Let us fix $u(s),v(s)\in\Sigma_{i}(s)$ and let $x(s):=Q(\tau)Bu(s)$, $y(s):=Q(\tau)Bv(s)$. We then have $x(s),y(s)\in Q(\tau)B\Sigma_{i}(s)$ and
  \[
    \norm{x(s)-y(s)}=\norm{Q(\tau)B(u(s)-v(s))}\leq\norm{Q(\tau)B}\norm{u(s)-v(s)}.
  \]
  Hence $\diam(Q(\tau)B\Sigma_{i}(s))\leq M_q\norm{B}\diam(\Sigma_{i}(s))$, where $M_q:=\max_{t\in J}\norm{Q(t)}$. Using point 1 we may now write
  \[
    \diam(Q(\tau)B\Sigma_{i}(s))\leq \delta'(s)\leq M_q\norm{B}\diam(\Sigma_{i}(s))\leq M_q\norm{B}\delta(s)
  \]
  for suitably chosen $\delta'(s),\delta(s)$. Passing to infimum we get the estimation
  \[
    \alpha\big(Q(\tau)BW^{-1}(D)(s)\big)\leq M_q\norm{B}\alpha(W^{-1}(D)(s))
  \]
  for every $s,\tau\in J$ and every bounded $D\subset X$.
\item[3.] Using now assumption $(H6)$ we obtain
  \[
   \alpha\big(Q(\tau)BW^{-1}(D)(s)\big)\leq M_q\norm{B}M_w\alpha(D)
  \]
  for every $s,\tau\in J$ and every bounded $D\subset X$.

\item[4.] Preparing the ground for the Ambrosetti Theorem \ref{thm Ambrosetti's MNC} let $J_t:=[0,t]\subset[0,T]$ and define a family of operators indexed by the elements of $D\subset X$, namely
  \begin{equation}\label{eqn defn equicontinuous familty F_t}
   \mathcal{F}_{t}:=\{Q(t-\cdot)BW^{-1}(y)(\cdot)\}_{y\in D}\subset C(J_t,X).
  \end{equation}
We will show that for every $t\in J$ and every bounded $D\subset X$ the family $\mathcal{F}_{t}\subset C(J_t,X)$ is equicontinuous. For that purpose note firstly that the operator $W^{-1}$ is a bounded and linear operator, hence it is continuous on $X$. 

   
Now fix $t\in J$ and define a function $\varphi_{t}:J_{t}\times X\rightarrow X$, 
\begin{equation}\label{eqn defn supplementary varphi function}
\varphi_{t}(s,y):=Q(t-s)BW^{-1}(y)(s).
\end{equation}
We will show that $\varphi_t$ is continuous on the product $J_t\times X$. Fix $y\in X$ and let $s_1,s_2\in J_t$ be such that $s_1+\tau=s_2$, $\tau>0$. We then have
\begin{align*}
  &\norm{Q(t-s_1)BW^{-1}(y)(s_1)-Q(t-s_2)BW^{-1}(y)(s_2)}\\
 &=\norm{Q(t-s_2+\tau))BW^{-1}(y)(s_2-\tau)-Q(t-s_2)BW^{-1}(y)(s_2)}\\
 &=\norm{Q(t-s_2)Q(\tau)BW^{-1}(y)(s_2-\tau)-Q(t-s_2)BW^{-1}(y)(s_2)}\\
 &=\norm{Q(t-s_2)\Big(Q(\tau)BW^{-1}(y)(s_2-\tau)-Q(\tau)BW^{-1}(y)(s_2)+\\
 &\quad+Q(\tau)BW^{-1}(y)(s_2)-BW^{-1}(y)(s_2)\Big)}\\
 &\leq\norm{Q(t-s_2)}\norm{Q(\tau)}\norm{BW^{-1}(y)(s_2-\tau)-BW^{-1}(y)(s_2)}+\\
 &\quad+\norm{Q(t-s_2)}\norm{Q(\tau)BW^{-1}(y)(s_2)-BW^{-1}(y)(s_2)},
\end{align*}
 where the last part tends to $0$ with $s_1\rightarrow s_2$, that is with $\tau\rightarrow0$. This follows from the continuity of $W^{-1}(y)$ on $J_t$ and strong continuity of the semigroup $Q$. Now joint continuity of $\varphi_t$ follows from linearity and continuity of $W^{-1}$ on $X$ and the decomposition 
 \[
 \varphi_t(s,y)-\varphi_t(\tau,z)=\varphi_t(s,y)-\varphi_t(s,z)+\varphi_t(s,z)-\varphi_t(\tau,z),
 \]
 where $(\tau,z)\rightarrow(s,y)$. 
 
 From continuity of $\varphi_t$ it follows that for every bounded $D\subset X$ the set $\varphi_t(J_t,D)\subset X$ remains bounded, i.e. there exists such $r<\infty$ that $\varphi_t\subset B(0,r)$, a zero-centred ball with a finite radius $r$. Defining, for a given bounded $D\subset X$, the set
 \begin{equation}\label{eqn defn equicontinuous familty values F_t(s)}
   \mathcal{F}_{t}(s):=\{\theta(s):\theta\in\mathcal{F}_t\}
  \end{equation}
  we have
 \[
  \mathcal{F}_{t}(s)=\bigcup_{y\in D}\varphi_t(s,y)\subset B(0,r)
 \]
 for suitable $r<\infty$. By $(H5)$ and Theorem~\ref{thm equicontinuity theorem by Kosmol} it follows that the collection of continuous mappings $\mathcal{F}_t$ is equicontinuous for every $t\in J$ and every bounded $D\subset X$.
 
 \item[5.] Fix bounded $D\subset X$. From the Ambrosetti Theorem \ref{thm Ambrosetti's MNC} and point 4 we have 
 \[
 \alpha(\mathcal{F}_{t})=\sup_{s\in J_t}\alpha(\mathcal{F}_t(s))=\alpha(\mathcal{F}_t(J_t))\qquad\forall t\in J,
 \]
 where $\mathcal{F}_t(J_t)=\bigcup_{s\in J_t}\mathcal{F}_t(s)$.
Point 3 gives
\[
\alpha(\mathcal{F}_t(s))=\alpha\big(Q(t-s)BW^{-1}(D)(s)\big)\leq M_q\norm{B}M_w\alpha(D)\quad\forall t\in J\ \forall s\in J_t.
\] 
 In consequence we have
\begin{equation}\label{eqn Ambrosetti thm for equicontinuous F_t}
 \sup_{s\in J_t}\alpha(\mathcal{F}_t(s))=\alpha(\mathcal{F}_t(J_t))\leq M_q\norm{B}M_w\alpha(D)\quad\forall t\in J.
\end{equation}


Note that for every $t\in J$ and every $y\in D$ there is $\varphi_t(s,y)\in\mathcal{F}_t(s)$. Hence, $\varphi_t(J_t,D)=\{\varphi(s,y):s\in J_t,y\in D\}=\mathcal{F}_t(J_t)$ and for every $t\in J$ and every bounded $D\subset X$ we get
\[
\alpha(\varphi_t(J_t,D))\leq M_k\alpha(D),
\]
hence for every $t\in J$ the mapping $\varphi_t:J_t\times X$ is condensing with constant $M_k:=M_q\norm{B}M_w$.

\item[6.] Define $K:J\times X\rightarrow X$, $K(t,x):=\int_{0}^{t}\varphi_t(s,x)ds=\int_{0}^{t}Q(t-s)BW^{-1}(x)(s)ds$. Fix $t\in J$ and $x\in X$, then
\begin{equation}\label{eqn k computed}
\begin{split}
 k(t,x)&=\frac{d}{dt}K(t,x)=\lim_{h\rightarrow0}\frac{1}{h}\big(K(t+h,x)-K(t,x)\big)\\
        &=\lim_{h\rightarrow0}\frac{1}{h}\Big( \int_{0}^{t}Q(t+h-s)BW^{-1}(x)(s)ds \\
	&\quad -\int_{0}^{t}Q(t-s)BW^{-1}(x)(s)ds\\
        &\quad+\int_{t}^{t+h}Q(t+h-s)BW^{-1}(x)(s)ds \Big)\\
        &=\lim_{h\rightarrow0}\Big(\frac{1}{h}(Q(h)-I)\int_{0}^{t}Q(t-s)BW^{-1}(x)(s))ds\\
        &\quad+\frac{1}{h}\int_{t}^{t+h}Q(t+h-s)BW^{-1}(x)(s)ds\Big)\\
        &=AK(t,x)+BW^{-1}(x)(t)+\int_{0}^{t}Q(t-s)B\frac{d}{ds}W^{-1}(x)(s)ds\\
	&=Q(t)BW^{-1}(x)(0)+L(t)Bu_{x}',
\end{split}
\end{equation}
provided that appropriate limits exist. We show it below.

\item [7.] Consider again the function $\varphi_{t}$ defined in \eqref{eqn defn supplementary varphi function}. Calulating its time derivative at $s\in[0,t]$ we obtain
\[
 \frac{d}{ds}\varphi_{t}(s,x)=-AQ(t-s)BW^{-1}(x)(s)+Q(t)B\frac{d}{ds}W^{-1}(x)(s).
\]
Initially the above result can be found either by elementary limit calculation or one can use the result in \cite[Lemma B.16]{Engel_Nagel}. Here, both parts on the right hand side exist, although for the sake of clarity we skip all the routine limit considerations in the argument of the generator $A$ leading to application of its extension - for more details see Proposition \ref{prop consequences of rigged Hilbert space} and \cite[Proposition 2.10.3]{Tucsnak_Weiss}. Note also, that by assumption the function 
\begin{equation}\label{eqn defn steering control time derivative}
u_{x}':[0,t]\rightarrow U,\qquad u_{x}':=\frac{d}{ds}W^{-1}(x)
\end{equation}
exists for all $x\in X$ and is continuous.

We also have the following 
\begin{align*}
 &BW^{-1}(x)(t)-Q(t)BW^{-1}(x)(0)=\varphi_{t}(t,x)-\varphi_{t}(0,x)=\int_{0}^{t}\frac{d}{ds}\varphi_{t}(s,x)ds\\
 =&-\int_{0}^{t}A\varphi_{t}(s,x)ds+L(t)Bu_{x}'=-A\int_{0}^{t}\varphi_{t}(s,x)ds+L(t)Bu_{x}',
\end{align*}

where, due to Proposion \ref{prop semigroup generator properties} we can move from the extension to the original generator $A$.
In consequence
\begin{align*}
 AK(t,x)&=A\int_{0}^{t}\varphi_{t}(s,x)ds=A\int_{0}^{t}Q(t-s)BW^{-1}(x)(s)ds\\
&=Q(t)BW^{-1}(x)(0)-BW^{-1}(x)(t)+L(t)Bu_{x}'.
\end{align*}

\item[8.] To finish the proof of \eqref{eqn k computed} consider the following estimaiton
\begin{align*}
        &\Big\|\frac{1}{h}\int_{t}^{t+h}Q(t+h-s)BW^{-1}(x)(s)ds-BW^{-1}(x)(t)\Big\|\\
        &=\frac{1}{h}\Big\|\int_{t}^{t+h}\Big(Q(t+h-s)BW^{-1}(x)(s)-BW^{-1}(x)(t)\Big)ds\Big\|\\
        &\leq\|Q(t+h-s)BW^{-1}(x)(s)-Q(t+h-s)BW^{-1}(x)(t)\|\\
	&+\|Q(t+h-s)BW^{-1}(x)(t)-BW^{-1}(x)(t)\|,
\end{align*}
for some $s\in[t,t+h]$. Taking the limit as $h\rightarrow 0$ there is also $s\rightarrow t$ and due the continuity of $t\mapsto Q(t)$ and continuity of $W^{-1}(x)$ above tends to zero and we obatin
\[
 \lim_{h\rightarrow0}\frac{1}{h}\int_{t}^{t+h}Q(t+h-s)BW^{-1}(x)(s)ds=BW^{-1}(x)(t).
\]
Combining now this result and the one of point 7 we obtain \eqref{eqn k computed}.

\item[9.]Fix bounded $D\subset X$. We have
\[
 k(t,x)=Q(t)BW^{-1}(x)(0)+L(t)Bu_{x}'=k_t(0,x)+L(t)Bu_{x}'
\]
and
\begin{align*}
k(J,D)=\bigcup_{x\in D}\bigcup_{t\in J}k(t,x).
\end{align*}
Note that for every $t\in J$ and every $x\in D$ there is 
$Q(t)BW^{-1}(x)(0)\in\mathcal{F}_t(0)$ with $\mathcal{F}_t$ defined for the same index set $D$. 

Due to the fact that $W$ is an injection, for every $x\in X$ there exists a unique $y\in X$ such that 
\[
 W^{-1}(y)=u_{x}'=\frac{d}{ds}W^{-1}(x)\in V/\ker L(T)B
\]
and $\norm{u_{x}'}=\norm{W^{-1}(y)}<\infty$.
In consequence, for every bounded $D\subset X$ the set 
\[
 D':=\{y\in X: W^{-1}(y)=u_{x}'=\frac{d}{ds}W^{-1}(x),\ x\in D\}
\]
is unique. 

Define, similarly to point 4, the equicontinuous family of operators 
  \begin{equation}\label{eqn defn equicontinuous familty F_t'}
    \begin{split}
    \mathcal{F}_{t}':&=\{Q(t-\cdot)BW^{-1}(y)(\cdot)\}_{y\in D'}\\
		    &=\{Q(t-\cdot)Bu_{x}'\}_{x\in D}\subset C(J_t,X).
    \end{split}
  \end{equation}
which may be regarded as indexed by elements of either $D'$ or $D$. Using the assumption $(H7)$ and following the same procedure which led to \eqref{eqn Ambrosetti thm for equicontinuous F_t}, we have
\begin{equation}\label{eqn Ambrosetti thm for equicontinuous F_t'}
 \alpha(\mathcal{F}_t'(J_t))\leq M_q\norm{B}M_w'\alpha(D)
\end{equation}

As the range of the function $[0,t]\ni s\mapsto Q(t-s)Bu_{x}'(s)\in X$ is contined in $\mathcal{F}_{t}'(J_t)$,
according to Lemma \ref{lem integral operator properties} there is 
\[
 L(t)Bu_{x}'\in T\cl\conv(\mathcal{F}_{t}'(J_t)\cup\{0\}).
\]

Define now a collection of operators $\mathcal{P}:=\{k(\cdot,x)\}_{x\in D}$, where each member acts from $J$ to $X$. From point 4 and above considerations we see that  $\mathcal{P}$ is in fact a collection of bounded operators, indexed again by elements of $D\subset X$. With the same reasoning as in point 4 we see that $\mathcal{P}$ is an equicontinuous set and, by Ambrosetti Theorem \ref{thm Ambrosetti's MNC}, we have
\begin{equation}\label{eqn Ambrosetti thm for equicontinuous P}
 \alpha(\mathcal{P})=\sup_{t\in J}\alpha(\mathcal{P}(t))=\alpha(\mathcal{P}(J))\qquad\forall t\in J.
\end{equation}
Due to the definition of $\mathcal{P}$ we have $k(J,D)=\mathcal{P}(J)$ and 
\[
 \mathcal{P}(t)=\bigcup_{x\in D}Q(t)BW^{-1}(x)(0)+L(t)Bu_{x}'\subset \mathcal{F}_t(J_t)\cup T\cl\conv(\mathcal{F}_{t}'(J_t)\cup\{0\})
\]
for every $t\in J$.
From \eqref{eqn Ambrosetti thm for equicontinuous F_t}, \eqref{eqn Ambrosetti thm for equicontinuous F_t'} and \eqref{eqn Ambrosetti thm for equicontinuous P} and Theorems \ref{thm mnc properties} and \ref{thm mnc properties B-space} it follows now that 
\begin{align*}
  \alpha(k(J,D))=&\alpha(P(J))\leq\alpha\big(\mathcal{F}_t(J_t)\cup T\cl\conv(\mathcal{F}_{t}'(J_t)\cup\{0\})\big)\\
		\leq& \max\{M_w,M_w'\}TM_q\norm{B}\alpha(D)
\end{align*}
and function $k$ is condensing.

\end{itemize}

\end{proof}

Based on Proposition \ref{prop H1-H2} and Proposition \ref{prop H3-H5} we may state the main Theorem of this article which gives sufficient conditions for the existence of a solution to integral equation \eqref{eqn Schmidt by Psi operator}. This is equivalent to the existence of a fixed point of the operator \eqref{eqn integral Psi operator} and results in exact controllability of system \eqref{eqn mild solution nonlinear}.

\begin{thm}\label{thm main theorem}
Assume $(H1)-(H7)$. Then a dynamical system with mild solution given by \eqref{eqn mild solution nonlinear} is exaclty controllable to the space $\mathrm{Im}\big(L(T)B\big)=X$, with trajectory $z\in C([0,\infty),X)\cap\mathcal{H}_{loc}^1((0,\infty),X_{-1})$.
\end{thm}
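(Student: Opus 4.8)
The plan is to assemble the two preparatory propositions into a direct application of the Schmidt existence theorem. First I would set up the decomposition of the fixed-point operator $\Psi$ from \eqref{eqn integral Psi operator}. Substituting the steering control $u_x$ of \eqref{eqn steering trajectory} into $\Psi$, the operator splits into the nonlinear part $L(t)fz$ and the control part $L(t)Bu_x$. Recognising these as integrals of the parameter-dependent families $\mathcal{G}$ and $\mathcal{K}$, I would define $g(t,x):=\frac{d}{dt}\int_0^t g_t(s,x)\,ds$ and $k(t,x):=\frac{d}{dt}\int_0^t Q(t-s)BW^{-1}(x)(s)\,ds$, so that the integral equation \eqref{eqn Schmidt integral form} corresponding to the pair $(g,k)$ is exactly the fixed-point equation $z=\Psi(z)$. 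The whole argument hinges on verifying that $(g,k)$ satisfies hypotheses a) and b) of Theorem~\ref{thm Schmidt ivp_existence}.

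Next I would discharge the two hypotheses of the Schmidt theorem by invoking the propositions already proved. Hypothesis a) is precisely the conclusion of Proposition~\ref{prop H1-H2}: under $(H1)$ and $(H2)$ the function $g$ computed in \eqref{eqn g computed} equals $Q(t)f(x)$ and is dissipative with constant $M_g$, i.e. $[x_1-x_2,g(t,x_1)-g(t,x_2)]_-\leq M_g\|x_1-x_2\|$. Hypothesis b) is the conclusion of Proposition~\ref{prop H3-H5}: under $(H3)$--$(H7)$ the function $k$ computed in \eqref{eqn k computed} is condensing with constant $M_k=\max\{M_w,M_w'\}TM_q\|B\|$, so that $\alpha(k(J,D))\leq M_k\alpha(D)$ for every bounded $D\subseteq X$. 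I would also note that continuity and boundedness of both $g$ and $k$ were established within those proofs (point 5 of Proposition~\ref{prop H1-H2} and points 4--8 of Proposition~\ref{prop H3-H5}), so all structural requirements of Theorem~\ref{thm Schmidt ivp_existence} are met.

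With a) and b) in hand, Theorem~\ref{thm Schmidt ivp_existence} yields a solution $z$ of the IVP \eqref{eqn Schmidt_IVP}, equivalently a solution of the integral form \eqref{eqn Schmidt integral form}, which is precisely a fixed point of $\Psi$. I would then translate this fixed point back into controllability: by the construction of $u_x$ in Step~1, substituting $u_x$ into $\Psi$ at $t=T$ gives $\Psi(z)(T)=L(T)fz+(L(T)B)(L(T)B)^{-1}(x_T-L(T)fz)=x_T$, so the fixed point satisfies $z(T)=x_T$ for the prescribed target $x_T\in X=\mathrm{Im}(L(T)B)$. The regularity claim $z\in C([0,\infty),X)\cap\mathcal{H}_{loc}^1((0,\infty),X_{-1})$ follows from the admissibility of $B$ together with Proposition~\ref{prop solution under admissibility of B}, since the mild solution \eqref{eqn mild solution nonlinear} inherits this behaviour once $f(z(\cdot))$ and $u$ are fixed.

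The main obstacle I anticipate is not in the logical assembly, which is largely bookkeeping, but in confirming that the decomposition $(g,k)$ genuinely realises the pair of functions required by the Schmidt theorem, namely that differentiating the integrals and then re-integrating reproduces $\Psi$ faithfully. This is the subtle point where the families $\mathcal{G},\mathcal{K}$ depending on the upper limit $t$ could misbehave; the two propositions were specifically engineered to compute $g(t,x)=Q(t)f(x)$ and the explicit form of $k(t,x)$ so that the $t$-dependence is resolved and the Schmidt hypotheses apply to honest functions on $J\times X$. Once this identification is accepted, the exact controllability conclusion is immediate, since every target in $X$ is reached by the associated fixed-point trajectory.
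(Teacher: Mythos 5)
Your proposal is correct and takes essentially the same route as the paper: the paper's own proof is exactly the assembly you describe --- Step 1's reduction of exact controllability to a fixed point of $\Psi$ via the steering control $u_x$, Proposition~\ref{prop H1-H2} discharging hypothesis a) and Proposition~\ref{prop H3-H5} discharging hypothesis b) of Theorem~\ref{thm Schmidt ivp_existence}, followed by an application of that theorem. The additional detail you supply (the explicit pair $(g,k)$, the endpoint computation $\Psi(z)(T)=x_T$, and the regularity claim via Proposition~\ref{prop solution under admissibility of B}) is precisely the content the paper delegates to Step 1 and the two propositions.
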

\begin{proof}
 The proof follows immediately from the reasoning in Step 1 section, Proposition \ref{prop H1-H2}, \ref{prop H3-H5}  and application of Theorem \ref{thm Schmidt ivp_existence}.
\end{proof}

\section{Example}
Let us take an example similar to the one chosen in \cite{RavMach13}, but with an emphasis put on nonlinearity $f$. Consider a one dimensional real non homogeneous transport partial differential equation with nonlinear part, given by
\begin{equation}\label{eqn transport PDE}
    \begin{split}
    &\frac{\partial}{\partial t}z(t,\xi)=\frac{\partial}{\partial\xi}z(t,\xi)+m(\xi)u(t,\xi)+f(z(t,\xi)),\\
    &z(0,\xi)=0\in X,
    \end{split}
\end{equation}
where spatial coordinate $\xi\in[0,1]$, time coordinate $t\in J:=[0,T]$, the state space $X$ and control space $U$ be $L^{2}(0,1)$. In other words, we consider the mapping $z$ defined on the cartesian product $[0,T]\times [0,1]$ as the state trajectory which for every $t\in J$ takes value $z(t)$, which is a mapping of $L^{2}$ class from the interval $[0,1]$ to $\mathbb{R}$.

Let $A:D(A)\rightarrow X$, $D(A)\subset X$ densely, be a generator of a vanishing (or nilpotent) left shift semigroup \cite[Example 2.3.8]{Tucsnak_Weiss}, defined as a spatial differentiation operator 
\[
Ax:=\frac{d}{d\xi}x,\qquad D(A):=\{x\in\mathcal{H}^{1}(0,1):x(1)=0\},
\]
where $\mathcal{H}^{1}(0,1)$ is the Sobolev space of all $L^{2}(0,1)$ functions for which its first derivative is also square integrable \cite[Definition 5.2.2]{Evans}. The semigroup $(Q(t))_{t\geq0}$ is explicitly given by

\begin{equation}\label{defn shift semigroup}
\big(Q(t)x\big)(\xi):=
\left\{\begin{array}{ll}
        x(\xi+t)    & \textrm{if } \xi\in[0,1],\ \xi+t\leq 1,\\
        0           & \textrm{if } \xi\in[0,1],\ \xi+t>1,\\
       \end{array}
\right.
\end{equation}
where we take $t\in J$. The semigroup $(Q(t))_{t\geq0}$ is not compact on $X$, but $\alpha(Q(t)D)\leq2\alpha(D)$ for every bounded set $D\subset X$, making it a condensing operator. Note also, that the semigroup $(Q(t))_{t\geq0}$ is contractive.

Define the control operator $B\in\mathcal{L}(U,X)$ appropriately as
\[
Bu(t)(\xi):=m(\xi)u(t)(\xi)
\]
for every $t\in J$ and $\xi\in[0,1]$ where $m(\xi)$ provides a spatial distribution of control. 

It is known that Hilbert spaces $L^2(0,1)$ and $l^2$ are isometrically isomorphic \cite{Muscat}. Let $P:L^2(0,1)\rightarrow l^2$ be such isometric isomorphism. Define also continuous $\varphi:\mathbb{R}\rightarrow\mathbb{R}$ as
\[
\varphi(\xi):=
\left\{\begin{array}{ll}
        0    						& \textrm{if } \xi<0,\\
        - \sqrt{\xi}         & \textrm{if } \xi\in[0,1],\\
        -1						& \textrm{if } \xi>1
       \end{array}
\right.
\]
and $\rho:l^2\rightarrow l^2$, $\rho(\alpha)=\rho\big((\alpha^1,\alpha^2,\alpha^3,\dots)\big):=(\varphi(\alpha^1),\frac{1}{2}\varphi(\alpha^2),\frac{1}{3}\varphi(\alpha^3),\dots)$. Let now the nonlinearity $f:X\rightarrow X$ be given by
\[
f(x):=(P^{-1}\rho P)(x).
\]
The dissipativity condition from assumption  $(H1)$ in Proposition \ref{prop H1-H2}, that is 
\[
 \langle x - y,f(x)-f(y)\rangle\leq M_f\norm{x-y}^2\qquad \forall x,y\in X
\]
is equivalent to 
\begin{equation}\label{eqn dissipativity cond equivalent}
\langle Px - Py,\rho Px-\rho Py\rangle\leq M_f\norm{Px-Py}^2\qquad \forall x,y\in X.
\end{equation}
We will show that $f$ does not fulfil Lipschitz condition
\begin{equation}
\exists_{M_f<\infty}\ \forall_{x,y\in L^{2}(0,1)}\quad\norm{f(x)-f(y)}\leq M_f\norm{x-y},
\end{equation}
which, using the definition of $f$, is equivalent to 
\begin{equation}\label{eqn defn Lipschitz condition equivalent}
\exists_{M_f<\infty}\ \forall_{x,y\in L^{2}(0,1)}\quad\norm{\rho Px-\rho Py}\leq M_f\norm{Px-Py}.
\end{equation}
 Indeed, fix $y=0$ and such sequence $(x_m)_{m\in\mathbb{N}}$ of elements of $L^2(0,1)$ that $(\alpha_m)_{m\in\mathbb{N}}:=(Px_m)_{m\in\mathbb{N}}$ and $\alpha_m=(\frac{1}{m},0,0,\dots)$, $m\in\mathbb{N}$. Note that $\alpha_m\rightarrow 0=Py$ as $m\rightarrow\infty$. We then have
 \[
\frac{\norm{\rho(Px_m)}}{\norm{Px_m}}=\frac{\norm{\rho(\alpha_m)}}{\norm{\alpha_m}}
=\frac{\sqrt{\varphi^2(\alpha^1)}}{\sqrt{\frac{1}{m^2}}}=\frac{\sqrt{\frac{1}{m}}}{\frac{1}{m}}=\sqrt{m}
 \]
and as $\sqrt{m}\rightarrow\infty$ as $m\rightarrow\infty$ we see that Lipschitz condition \eqref{eqn defn Lipschitz condition equivalent} cannot be fulfilled at $y=0$.

It remains to show that $f$ fulfils condition \eqref{eqn dissipativity cond equivalent}. Fix $Px=\alpha$ and $Py=\beta$. We have
\begin{align*}
	&\langle\alpha-\beta,\rho(\alpha)-\rho(\beta)\rangle=\sum_{i=1}^{\infty}(\alpha^i-\beta^i)(\varphi(\alpha^i)-\varphi(\beta^i))\leq0
\end{align*}
because, due to monotonicity of $\varphi$, the element $(\alpha^i-\beta^i)(\varphi(\alpha^i)-\varphi(\beta^i))\leq0$ for every $i\in\mathbb{N}$. Hence, it is enough to take any positive $M_f$ in \eqref{eqn dissipativity cond equivalent} and the condition is met. Note also that with the semigroup Definition \eqref{defn shift semigroup} assumption  $(H2)$ is equivalent to \eqref{eqn dissipativity cond equivalent}.

Note also that $f$ is uniformly bounded. This follows from the fact that 
\[
\norm{f(x)}=\norm{\rho(Px)}\qquad\forall x\in X
\]
and
\[
\norm{\rho(\alpha)}^2=\sum_{n=1}^{\infty}\big[\frac{1}{n}\varphi(\alpha^n)\big]^2\leq\sum_{n=1}^{\infty}\frac{1}{n^2}<\infty.
\]

\section{Conclusions}
In this article we showed new results in establishing sufficient conditions for controllability of particular types of dynamical systems. Our results expand the results found in \cite{CarQui84}, where the authors use Nussbaum fixed point theorem. In particular, we did not impose any compactness condition on the semigroup, instead we used its condensing property. This is a considerably weaker assumption than the one taken in the above mentioned work.

The second improvement in comparison to the present state of literature is that we did not assume that the nonlinearity is Lipschitz. The price paid for that is that we used an existence result initially intended for the initial value problem, not formulated in a fixed point form. The authors are not aware whether there exists a similar fixed point theorem.

Our result can be expanded to incorporate phenomena such as impulsive behaviour or nonlocal conditions in a way similar to \cite{RavMach13}. Note, however, that the assumptions of the Schmidt theorem are in a sense weaker than the demands of the M{\"o}nch's condition used be the authors of \cite{RavMach13}.

\bibliographystyle{amsplain}

\paragraph{Acknowledgements}
This project has received funding from the European Union's Horizon 2020 research and innovation programme under the Marie Sk{\l}odowska-Curie grant agreement No 700833.\\

\bibliography{ref_database}

\end{document}